\documentclass[11pt]{amsart}

\usepackage{mathptmx}
\usepackage{amsmath,amssymb,amsthm,mathrsfs,enumitem}
\usepackage[colorlinks=true,linkcolor=black,citecolor=red,urlcolor=blue]{hyperref}

\newtheorem{theorem}{Theorem}[section]
\newtheorem{proposition}[theorem]{Proposition}
\newtheorem{corollary}[theorem]{Corollary}
\newtheorem{lemma}[theorem]{Lemma}
\newtheorem{remark}[theorem]{Remark}

\newtheorem{assumption}[theorem]{Assumption}

\numberwithin{equation}{section}

\DeclareMathOperator{\Ker}{Ker}
\DeclareMathOperator{\gr}{gr}
\DeclareMathOperator{\spanC}{span}
\DeclareMathOperator{\ord}{ord}

\selectfont

\begin{document}

% \title[Weighted composition operators]{Weighted composition operators on functional quasi-Banach spaces: dynamical rigidity and affine symbols}
\title[Dynamical rigidity for weighted composition operators]{Dynamical rigidity for weighted composition operators on holomorphic function spaces}

\author{Isao Ishikawa}
\address{Kyoto University, Kyoto, Japan}
\email{ishikawa.isao.5s@kyoto-u.ac.jp}

\subjclass[2020]{Primary 47B33; Secondary 47A16, 37F10, 37F80}
\keywords{Weighted composition operators, holomorphic dynamics, affine rigidity, periodic points, hypercyclicity}
\date{}

\begin{abstract}
We study weighted composition operators on quasi-Banach spaces of holomorphic functions via their induced action on jets along periodic orbits. Under a natural graded nondegeneracy condition, boundedness and compactness, together with a nonvanishing condition on the weight along the periodic orbit, impose strong restrictions on the local holomorphic dynamics of the symbol. We also obtain local periodic-point obstructions from supercyclicity, hypercyclicity, and cyclicity.
As consequences, we obtain affine-symbol rigidity for bounded weighted composition operators on spaces of entire functions. In one complex variable, if the ambient function space is any infinite-dimensional quasi-Banach space continuously embedded in the space of entire functions, then boundedness forces the symbol to be affine. In particular, this applies to every infinite-dimensional reproducing kernel Hilbert space of entire functions. We also prove a higher-dimensional affine-rigidity theorem under mild stability assumptions, and a weighted rigidity theorem for polynomial automorphisms of two complex variables.  Our approach relies on local holomorphic dynamics at periodic points rather than reproducing-kernel formulas or space-specific norm estimates, and it applies uniformly across broad classes of holomorphic function spaces.
\end{abstract}

\maketitle

\section{Introduction}\label{sec:introduction}
% In this paper, we develop a theory of weighted composition operators that leads to new rigidity results and dynamical obstructions.
% Let $X$ be a connected complex manifold and let $\mathcal{O}$ be the sheaf of holomorphic functions on $X$.
% We denote by $\mathcal{O}(X)$ the space of global holomorphic functions on $X$ and equip it with the topology of uniform convergence on compact sets.
% We also denote the germ at $p$ by $\mathcal{O}_{X,p}$, and the maximal ideal of germs vanishing at $p$ by $\mathfrak{m}_p$.
In this paper, we study to what extent operator-theoretic properties of a weighted composition operator
\[
uC_f:h\mapsto u\cdot(h\circ f)
\]
force rigidity of the underlying holomorphic map $f$. 
Our main result is that boundedness, compactness, and linear-dynamical behavior of $uC_f$ impose strong restrictions on the local holomorphic dynamics of $f$ at periodic points.

The basic tool is a jet filtration along periodic orbits. 
It produces finite-dimensional graded structures on which the weighted composition operator acts, and these structure have enough local dynamical information to detect obstructions. 
Under a natural graded nondegeneracy hypothesis on the ambient function space, we obtain local restrictions from boundedness and compactness, and also from supercyclicity, hypercyclicity, and cyclicity.

This local theory has several global consequences. In one complex variable, boundedness of a weighted composition operator with nonzero weight on any infinite-dimensional quasi-Banach space continuously embedded in $\mathcal O(\mathbb C)$ forces the symbol to be affine. In higher dimensions, we obtain an affine-rigidity theorem under mild stability assumptions, and for polynomial automorphisms of $\mathbb C^2$ we prove a weighted rigidity theorem. 
Thus the paper is not only about operator-theoretic properties of specific function spaces but it gives a general mechanism that turns operator assumptions into rigidity statements for holomorphic dynamical systems.

Let $V\subset \mathcal{O}(X)$ be a linear subspace.
In this paper, we always assume that the inclusion 
\[\iota: V \hookrightarrow \mathcal{O}(X)\] 
is continuous.
For almost all results in this paper, we assume that $V$ is a quasi-Banach
space. Recall that a quasi-Banach space is a complete Hausdorff topological
vector space endowed with a quasi-norm $\|\cdot\|_V$, that is, a map
\[
\|\cdot\|_V\colon V\to \mathbb{R}_{\ge 0}
\]
such that for all $a\in\mathbb{C}$ and $v,w\in V$,
\begin{enumerate}[label={\rm (\arabic*)}]
\item $\|av\|_V=|a|\,\|v\|_V$,
\item $\|v+w\|_V\le K(\|v\|_V+\|w\|_V)$ for some $K \ge 1$ independent of $v$ and $w$,
\item $\|v\|_V=0$ implies $v=0$.
\end{enumerate}
% A quasi-Banach space is also characterized as a complete Hausdorff topological vector space having a bounded neighborhood of $0$ (for the details, see \cite{kalton_peck_roberts_1984}).
A typical example of a quasi-Banach space with continuous inclusion is a reproducing kernel Hilbert space of holomorphic functions (see, for example, \cite[Section~3]{ISHIKAWA2023109048}).

Let $f:X\to X$ be holomorphic, and let $u\in \mathcal{O}(X)$.
We define the weighted pull-back $uf^*:\mathcal{O}(X)\to \mathcal{O}(X)$ by
\[
uf^*(h)=u\cdot (h\circ f)
\qquad (h\in \mathcal{O}(X)).
\]
We note that $uf^*$ induces a continuous linear map from $\mathcal{O}_{X,f(p)}$ to $\mathcal{O}_{X,p}$ for each $p\in X$.
We denote by $uC_f$ the linear operator on $V$ as the restriction of $uf^*$ to $V$, with domain $D(uC_f):=\{h\in V: u\cdot(h\circ f)\in V\}$.
When $u \equiv 1$, we omit $u$ from the notation and denote the operators by $f^*$ and $C_f$ for simplicity.

Our aim is to connect boundedness, compactness, and (super, hyper)cyclicity of $uC_f$ with the dynamical properties of $f$.
We show that bounded weighted composition operators with nonvanishing weight force strong restrictions on the symbol, and in many cases force the symbol to be affine.
Also, we show that compactness and (super, hyper)cyclicity give further dynamical obstructions.

Let us introduce the jet filtration at a point, which is the main tool in our analysis.
Then, we can show that the weighted pull-back $uf^*$ induces a well-defined map
\begin{align}
    \gr_{p}^n(uf^*):
    \mathfrak{m}_{f(p)}^n\big/\mathfrak{m}_{f(p)}^{n+1}
    \longrightarrow
    \mathfrak{m}_p^n\big/\mathfrak{m}_p^{n+1}.
    \label{eq:local-graded-map}
\end{align}
Let $\iota_p: V\hookrightarrow \mathcal{O}(X)\to \mathcal{O}_{X,p}$ be the natural map, and for each $n \ge 0$, define a closed subspace
\[
V_{p,n}:=\iota_p^{-1}(\mathfrak{m}_p^n).
\]
In other words, $V_{p,n}$ is the space of functions in $V$ whose Taylor coefficients at $p$ of total degree less than $n$ all vanish.
Via the inclusion $\iota:V\hookrightarrow \mathcal{O}(X)$, we naturally regard $V_{p,n}/V_{p,n+1}$ as a subspace of $\mathfrak{m}_p^n\big/\mathfrak{m}_p^{n+1}$.

\begin{assumption}
\label{ass:image-condition}
The tuple $(V,f,u,p)$ satisfies
\[
{\rm Im}\,\gr_{p}^n(uf^*) \subset V_{p,n}/V_{p,n+1}
\]
for infinitely many $n\ge 1$.
\end{assumption}

\begin{assumption}
\label{ass:image-condition-strong}
The pair $(V,p)$ satisfies
\[
\mathfrak{m}_p^n\big/\mathfrak{m}_p^{n+1} = V_{p,n}/V_{p,n+1}
\]
for infinitely many $n\ge 0$.
\end{assumption}

Obviously, Assumption \ref{ass:image-condition-strong} for $(V,p)$ implies Assumption \ref{ass:image-condition} for $(V,f,u,p)$ for all $f$ and $u$.
Typical function spaces $V$ satisfy Assumption \ref{ass:image-condition-strong} for sufficiently many $p$.
Several concrete examples are listed in \cite[Section~3]{ISHIKAWA2023109048}, including many RKHS and Fock-type spaces.
We also emphasize that this condition is equivalent to the kernel condition for the map ``$\kappa_p^n$'' introduced in \cite{ISHIKAWA2023109048} (see Appendix~\ref{app:comparison}).

For a periodic point $p$ of period $r$, we write
\[
u_r:=\prod_{j=0}^{r-1}u\circ f^j.
\]
Then, we state the main theorem:
\begin{theorem}[boundedness and compactness]\label{thm:bounded-local}
Let $V\subset \mathcal{O}(X)$ be a quasi-Banach space with continuous inclusion.
Let $p\in X$ be a periodic point of $f$ with period $r\ge 1$.
Assume that $uC_f$ is a bounded (resp. compact) linear operator on $V$, that $u_r(p)\neq 0$, and that $(V,f^r,u_r,p)$ satisfies Assumption~\ref{ass:image-condition}.
Then, every eigenvalue $\alpha$ of the Jacobian matrix ${\rm d}(f^r)_p:T^{1,0}_pX\to T^{1,0}_pX$ satisfies $|\alpha|\le 1$ (resp. $|\alpha|<1$).
\end{theorem}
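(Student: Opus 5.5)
The plan is to pass to the $r$-th iterate, which fixes $p$, and to read off spectral information from the graded pieces of the jet filtration at $p$. Set $T:=(uC_f)^r$. On $V$ this equals $u_r C_{f^r}$, and it is bounded (resp.\ compact) whenever $uC_f$ is. Since $f^r(p)=p$, the operator $T$ maps $V_{p,n}$ into $V_{p,n}$ for every $n$. Hence $T$ induces a linear operator $\overline T_n$ on the finite-dimensional quotient $W_n:=V_{p,n}/V_{p,n+1}\subset \mathfrak m_p^n/\mathfrak m_p^{n+1}$, and $\overline T_n$ coincides with the restriction of $\gr_p^n(u_rf^{r*})$ to $W_n$.

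\textbf{Step 1: identify $\gr_p^n$.} In local coordinates at $p$, with $A={\rm d}(f^r)_p$, the map $\gr_p^n(u_rf^{r*})$ is $u_r(p)$ times the $n$-th symmetric power of the transpose of $A$ acting on homogeneous polynomials of degree $n$. Its eigenvalues are therefore $u_r(p)\alpha^{J}$ for multi-indices $|J|=n$, where $\alpha_1,\dots,\alpha_d$ are the eigenvalues of $A$. In particular, for any eigenvalue $\alpha$ of $A$, the number $\lambda_n:=u_r(p)\alpha^n$ is an eigenvalue, realized on $\ell^n$ with $\ell$ a left eigenvector of $A$.

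\textbf{Step 2: bring eigenvalues into $W_n$.} Assumption~\ref{ass:image-condition} for $(V,f^r,u_r,p)$ says that for infinitely many $n$ the image of $\gr_p^n(u_rf^{r*})$ lies in $W_n$. For such $n$, any eigenvector with nonzero eigenvalue lies in the image, hence in $W_n$. Since $u_r(p)\ne0$, every $\lambda_n$ with $\alpha\neq 0$ is an eigenvalue of $\overline T_n$; the case $\alpha=0$ is trivial.

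\textbf{Step 3: bounded case.} Equip $W_n$ with the quotient quasi-norm. Then $\|\overline T_n\|\le\|T\|$, uniformly in $n$, so $|u_r(p)|\,|\alpha|^n=|\lambda_n|\le\|T\|$ for infinitely many $n$. This forces $|\alpha|\le1$.

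\textbf{Step 4: compact case.} By the bounded case we already know $|\alpha_j|\le1$ for all $j$. Suppose $|\alpha|=1$ for some eigenvalue $\alpha$; I would derive a contradiction. Every $\lambda_n$ then has modulus $c:=|u_r(p)|>0$. I will show that $\lambda_n$ is an eigenvalue of $T$ itself, with an eigenvector of vanishing order $\ge n$ at $p$. Consider the exact sequence $0\to V_{p,n+1}\to V_{p,n}\to W_n\to0$ of $T$-invariant spaces. If $T-\lambda_n$ were invertible on both $V_{p,n}$ and $V_{p,n+1}$, it would be invertible on the quotient $W_n$, which is false. Hence $\lambda_n\in\sigma(T|_{V_{p,m}})$ for some $m\in\{n,n+1\}$. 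Compactness of $T|_{V_{p,m}}$ gives that nonzero spectrum consists of eigenvalues; here I invoke Riesz theory for compact operators on quasi-Banach spaces (Kalton). So there is $h_n\in V_{p,n}\setminus\{0\}$ with $Th_n=\lambda_nh_n$. This produces infinitely many eigenvectors $h_n$ of $T$ with eigenvalues on the circle $|\lambda|=c$ and with orders of vanishing at $p$ tending to infinity.

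To conclude, I group the $h_n$ by order. If $h$ has order exactly $k$ at $p$, its leading jet is an eigenvector of $\gr_p^k$, so $\lambda$ has the form $u_r(p)\alpha^J$. Passing to a subsequence with strictly increasing exact orders, the $h_n$ are linearly independent, since distinct leading orders give a triangular structure. This yields an infinite-dimensional span of eigenvectors whose eigenvalues all have modulus $c$. That contradicts the Riesz theory fact that $\{\lambda\in\sigma(T):|\lambda|\ge c\}$ is finite with finite-dimensional generalized eigenspaces. Therefore $|\alpha|<1$.

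\textbf{Main obstacle.} The delicate point is Step 4, which needs two ingredients. First, Riesz--Schauder theory must be justified in the quasi-Banach (non-locally-convex) setting; I expect to cite Kalton's results on compact operators. Second, the passage from ``eigenvalue of the graded quotient'' to ``eigenvalue of $T$ with high vanishing order'' must be carried out, which I handle via the spectral exact-sequence argument above.
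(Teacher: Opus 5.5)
Your Steps 1--3 coincide with the paper's bounded case: Lemma~\ref{lem:eigenvalue-local-graded-action} gives the eigenvalue on the graded piece, and then $|u_r(p)|\,|\alpha|^n\le\|S_{p,n}\|\le\|S|_{V_{p,n}}\|\le\|S\|$ for infinitely many $n$.

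Your compact case is correct but follows a genuinely different and heavier route.

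\textbf{The paper's compact case.} The paper stays entirely within the norm estimate. It shows that compactness alone forces $\|S|_{V_{p,n}}\|\to 0$. Suppose $h_n\in V_{p,n}$ with $\|h_n\|_V=1$ and $\|Sh_n\|_V\ge\varepsilon$. A subsequence satisfies $Sh_{n_k}\to g$. This $g$ lies in every closed $S$-invariant subspace $V_{p,m}$, so $g=0$ because $\bigcap_m V_{p,m}=\{0\}$, a contradiction. Combining this with $|u_r(p)|\,|\alpha|^n\le\|S|_{V_{p,n}}\|$ for infinitely many $n$ gives $|\alpha|<1$ at once. No reduction to $|\alpha|=1$ and no spectral theory are needed.

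\textbf{Your compact case.} You lift the graded eigenvalues $\lambda_n$ to genuine eigenvectors $h_n\in V_{p,n}$ of $T$, using the Fredholm alternative and the exact sequence $0\to V_{p,n+1}\to V_{p,n}\to W_n\to 0$. You then contradict the finite-dimensionality of the span of eigenvectors with $|\lambda|\ge c$. This is valid. Riesz--Schauder theory for compact operators does hold on quasi-Banach spaces: the Riesz lemma, closedness of $R(\lambda-T)$, and the descending-range argument use only homogeneity and the quasi-triangle inequality. They need neither duality nor complementation of finite-dimensional subspaces, which can fail here. Your final linear-independence step via strictly increasing vanishing orders is also sound; the identification of leading jets with eigenvectors of $\gr_p^k$ is not needed there.

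\textbf{Comparison.} Your route buys more structural information: honest eigenvectors of $(uC_f)^r$ with arbitrarily high vanishing order at $p$. For the stated theorem, though, it imports a theory that must be justified in the non-locally-convex setting. The paper's argument is self-contained, a few lines long, and uses only sequential compactness and closedness of the $V_{p,m}$.
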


Let $\mathrm{SU}(d)$ be the set of unitary matrices of size $d$ with determinant 1.
Combining Theorem~\ref{thm:bounded-local} with a fact on non-affine holomorphic maps, we obtain a general affine-rigidity result:
\begin{theorem}[affine rigidity from boundedness]\label{thm:global-affine}
Let $X = \mathbb{C}^d$.
Let $V\subset \mathcal{O}(\mathbb{C}^d)$ be a quasi-Banach space with continuous inclusion.
Assume that the following two conditions hold:
\begin{enumerate}[label={\rm (\arabic*)}]
\item Assumption~\ref{ass:image-condition-strong} for $(V,p)$ holds for every $p \in \mathbb{C}^d$,
\item for every $a\in(0,1)$ and $U \in \mathrm{SU}(d)$, there exists a nonvanishing function $v\in \mathcal{O}(\mathbb{C}^d)$ such that $vC_{aU}$ is bounded on $V$.
\end{enumerate}
Let $f:\mathbb{C}^d\to \mathbb{C}^d$ be holomorphic, and let $u\in \mathcal{O}(\mathbb{C}^d)$ be nonvanishing.
If $uC_f$ is bounded on $V$, then $f$ is affine.
\end{theorem}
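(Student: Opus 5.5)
The plan is to argue by contradiction: from boundedness of $uC_f$ and hypothesis (2) we build a bounded weighted composition operator whose symbol has a repelling periodic point, and then apply Theorem~\ref{thm:bounded-local}. The elementary identity behind this is that for $a\in(0,1)$, $U\in\mathrm{SU}(d)$ and $v$ as in (2),
\[
vC_{aU}\,uC_f\,h \;=\; v\cdot(u\circ aU)\cdot(h\circ f\circ aU).
\]
Hence $wC_g$, with $g:=f\circ(aU)$ and $w:=v\cdot(u\circ aU)$, is bounded on $V$ as a composition of bounded operators. Moreover $w$ is nonvanishing, so $w_r(p)\neq0$ at every periodic point $p$ of $g$. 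By hypothesis (1), Assumption~\ref{ass:image-condition-strong} holds at every point, so $(V,g^r,w_r,p)$ satisfies Assumption~\ref{ass:image-condition}. Theorem~\ref{thm:bounded-local} then says that every periodic point of $g$ has multipliers of modulus $\le1$. The proof therefore reduces to a purely dynamical fact.

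\emph{Fact to prove:} if $f$ is not affine, there exist $a\in(0,1)$ and $U\in\mathrm{SU}(d)$ such that $f\circ(aU)$ has a periodic point with a multiplier of modulus $>1$.

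I would first try to realize this with a fixed point. Since $f$ is non-affine, ${\rm d}f$ is a nonconstant entire matrix-valued map, so by Liouville $\|{\rm d}f_q\|$ is unbounded. The point $z=f(q)$ is fixed by $g$ exactly when $aUf(q)=q$. This requires $|f(q)|>|q|$, with $a=|q|/|f(q)|$, and $U$ sending $f(q)/|f(q)|$ to $q/|q|$. At such a point ${\rm d}g_z=a\,{\rm d}f_q\,U$. For $d\ge2$ the unitaries with $\det=1$ sending one given unit vector to another form a coset of a copy of $\mathrm{U}(d-1)$. I would use this residual freedom to make the spectral radius of ${\rm d}f_q\,U$ large, for instance by aligning $U$ with the singular vectors of ${\rm d}f_q$ off the constrained direction, or by averaging the trace over the coset. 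The quantitative input needed is points $q$ with $|f(q)|>|q|$ at which $a\|{\rm d}f_q\|$ is still large. I would look for these along complex lines where $f$ grows superlinearly, which must exist for non-affine $f$ by Cauchy estimates on the Taylor coefficients of degree $\ge2$.

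For $d=1$, $\mathrm{SU}(1)=\{1\}$, so hypothesis (2) gives only the dilations $aI$. Here I would avoid the construction above altogether. The classical fact that every non-affine entire function has repelling periodic points (Fatou–Julia for polynomials of degree $\ge2$, and Baker's theorem for transcendental maps) applies directly to $f$ itself. Theorem~\ref{thm:bounded-local} then yields the contradiction without using (2).

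The main obstacle is the higher-dimensional fact. Non-affine maps of $\mathbb{C}^d$, such as shears $(z,w)\mapsto(z,w+z^2)$, can lack repelling periodic points, which is exactly why the twist by $aU$ is needed. Making the eigenvalue of $a\,{\rm d}f_q\,U$, and not merely its norm, exceed $1$ while respecting the fixed-point constraint is the delicate step. If the fixed-point approach fails, I would fall back on period $r\ge2$ orbits of $f\circ(aU)$, where products of several Jacobians give more room to create an expanding direction.
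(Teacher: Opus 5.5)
Your reduction is the same as the paper's: $(vC_{aU})(uC_f)=(v\cdot(u\circ aU))\,C_{f\circ(aU)}$ is bounded with nonvanishing weight, so Theorem~\ref{thm:bounded-local} rules out repelling periodic points of $f\circ(aU)$. Your $d=1$ argument via repelling periodic points of non-affine entire functions is also fine. The gap is the higher-dimensional dynamical fact. For $d\ge2$ this is the whole content of the theorem, and you leave it unproved. The mechanism you sketch for it also fails.

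The unitaries in $\mathrm{SU}(d)$ sending one unit vector to another form a coset of the stabilizer, which is $\mathrm{SU}(d-1)$, not $\mathrm{U}(d-1)$. For $d=2$ this coset is a single point, because $\mathrm{SU}(2)$ acts simply transitively on the unit sphere of $\mathbb{C}^2$. So once $q$ is fixed there is no residual freedom in $U$ to align singular vectors or to average over. Moreover, knowing that $a\|{\rm d}f_q\|$ is large gives no control on the eigenvalues of $a\,{\rm d}f_q\,U$. The fallback to period $r\ge2$ orbits is not developed at all.

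The missing idea is to choose $q$ variationally, so that an eigenvector appears automatically.
\begin{itemize}
\item[(i)] Let $M(r)=\max_{\|z\|=r}\|f(z)\|$. By Hadamard's three-circles theorem applied to $\zeta\mapsto\langle f(\zeta v),w\rangle$, the function $H(s)=\log(M(e^s)/e^s)$ is convex.
\item[(ii)] $H$ is unbounded above because $f$ is non-affine (Cauchy estimates). Hence some $s$ has $H(s)>0$, with $H$ differentiable at $s$ and $H'(s)>0$.
\item[(iii)] Put $r=e^s$ and choose $q$ with $\|q\|=r$ maximizing $\|f\|$ on that sphere. Set $p=f(q)$ and $B={\rm d}f_q$.
\item[(iv)] The Lagrange condition at the maximum gives $B^*p=\lambda q$ with $\lambda$ real. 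Comparing with the radial derivative of $t\mapsto\|f(tq/r)\|$ at $t=r$ gives $\lambda=M(r)M'(r)/r$.
\item[(v)] Take $a=r/M(r)\in(0,1)$ and $U\in\mathrm{SU}(d)$ with $aUp=q$. Then $U^*q=ap$, so $(aBU)^*p=a^2\lambda p=\bigl(rM'(r)/M(r)\bigr)p=(1+H'(s))\,p$.
\item[(vi)] Therefore $p$ is a fixed point of $f\circ(aU)$, and its derivative $aBU$ has the real eigenvalue $1+H'(s)>1$.
\end{itemize}
This construction produces the expanding eigenvector directly, instead of trying to turn norm growth into spectral growth.
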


Our approach is conceptually different from most earlier affine-symbol theorems for concrete spaces of entire functions.
In those works, affineness is typically derived from explicit formulas for reproducing kernels, coefficient weights, or other estimates that work for a specific function space; see, for example, \cite{CMS,Le2014,DKL17,SS17,IIS20,CARROLL2021125234}.
In contrast, Theorem~\ref{thm:global-affine} derives affineness from a dynamical viewpoint, namely the boundedness problem is linked to the existence of local dynamical obstructions for non-affine holomorphic maps, rather than to explicit analysis of a particular function space.

\medskip
Our approach also provides a necessary condition for cyclicity of weighted composition operators from the viewpoint of local dynamics.
For a topological vector space $W$ and a densely defined linear operator $S$ on $W$ with domain $D(S)$, let
\[
D(S^0):=W,\qquad D(S^{n+1}):=\{x\in D(S^n):S^n x\in D(S)\}\quad (n\ge0),
\]
and set
\[
D_\infty(S):=\bigcap_{n=0}^\infty D(S^n).
\]
We call $S$ \emph{hypercyclic} if there exists $x\in D_\infty(S)$ such that $\{S^n x:n\ge 0\}$ is dense in $W$, and we call such a vector $x$ a hypercyclic vector for $S$.
We call $S$ \emph{supercyclic} if there exists $x\in D_\infty(S)$ such that $\{\lambda S^n x:\lambda\in\mathbb{C},\ n\ge 0\}$ is dense in $W$, and we call such a vector $x$ a supercyclic vector for $S$.
We call $S$ \emph{cyclic} if there exists $x\in D_\infty(S)$ such that $\spanC\{S^n x:n\ge 0\}$ is dense in $W$, and we call such a vector $x$ a cyclic vector for $S$.

Then, we have a result on supercyclicity and hypercyclicity:
\begin{theorem}[supercyclicity and hypercyclicity]\label{thm:hyper-local}
Let $V\subset \mathcal{O}(X)$ be a general complex topological vector space with continuous inclusion.
Assume that $uC_f$ is supercyclic (resp. hypercyclic) on $V$ with $\dim V \ge 2$ (resp. $\dim V \ge 1$).
Then, $f$ has no periodic points.
\end{theorem}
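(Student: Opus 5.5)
Suppose, for contradiction, that $f$ has a periodic point $p$ of period $r\ge1$. The plan is to find a nonzero continuous linear functional on $V$ built from evaluation along the orbit of $p$, together with a finite-dimensional quotient on which $uC_f$ acts through a very simple matrix. We then use the standard fact that supercyclicity and hypercyclicity pass to quotients with dense image. The relevant dynamics on a quotient of dimension $\le 2$ (resp. $1$) cannot be supercyclic (resp. hypercyclic).

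\textbf{Setup.} For each $j=0,\dots,r-1$ let $p_j:=f^j(p)$, so $p_r=p_0$. Define $\delta_j(h):=h(p_j)$; it is continuous on $V$ because $\iota$ is continuous. For $h\in D(uC_f)$,
\[
\delta_j(uC_f h)=u(p_j)\,\delta_{j+1}(h).
\]
So the map $\Phi:V\to\mathbb{C}^r$, $\Phi(h)=(h(p_0),\dots,h(p_{r-1}))$, intertwines $uC_f$ with the weighted cyclic shift
\[
T(z_0,\dots,z_{r-1})=(u(p_0)z_1,\dots,u(p_{r-1})z_0),
\qquad\text{so}\qquad T^r=u_r(p)\,\mathrm{Id}.
\]
Let $W:=\Phi(V)$, which is $T$-invariant along orbits. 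If $x$ is supercyclic (resp. hypercyclic), then $\Phi(x)$ is supercyclic (resp. hypercyclic) for $T|_W$, because $\Phi$ is continuous and surjective onto $W$.

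\textbf{Case $W\neq0$.} The orbit $\{T^n\Phi(x)\}$ is contained in the finite set $\{u_r(p)^k T^i\Phi(x)\}$ with $0\le i<r$. Projectively it therefore lies on at most $r$ complex lines. A finite union of lines is dense in $W$ only if $\dim W\le1$.

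In the hypercyclic case, $\dim W=1$ means a sequence $c\lambda^n$ with $\lambda^r=u_r(p)$ would have to be dense in $\mathbb{C}$, which is impossible: a geometric sequence is never dense in $\mathbb{C}$.

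In the supercyclic case, $\dim W=1$ is not yet a contradiction. Here I need a second functional. The natural choice is a first-order jet functional at $p$, or alternatively the evaluation $\delta_0$ combined with any $\ell\in V^*$. This step is the main obstacle, because $V$ is a general topological vector space and may have few continuous functionals besides point evaluations at points of $X$. I would instead try to get $\dim W\ge2$ by a different route: any two-dimensional quotient obtained from point evaluations $h\mapsto(h(p),h(q))$ with $q$ not in the orbit fails to be invariant.

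So the approach I would try first is to use the jets of order $\le1$ at all points of the orbit. Evaluations of $h$ and $dh$ at the $p_j$ form a finite-dimensional $T$-invariant quotient, on which $uC_f$ acts through $u(p_j)$, $du$, and $df$. The cycle return map on this quotient has block-triangular form, with the diagonal scalar $u_r(p)$ on the value part. Supercyclicity then forces the image to be at most one-dimensional and equal to a value-line. Consequently every $h\in V$ satisfies linear relations among its values and derivatives on the orbit.

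If the image is $0$, then all $h\in V$ vanish at $p$. For the supercyclic vector $x$, all of $\lambda (uC_f)^n x$ vanish at $p$ to some order. Passing to the lowest order $n_0$ at which $V$ has a nonzero jet at $p$ (it exists since $\dim V\ge1$ and $V\subset\mathcal O(X)$ with $X$ connected), we get a nonzero finite-dimensional invariant quotient $V/V_{p,n_0+1}$, using $V_{p,n}=\iota_p^{-1}(\mathfrak m_p^n)$ along all orbit points. On this quotient the return map is $u_r(p)\cdot\mathrm{Sym}^{n_0}(d(f^r)_p)^{\!*}$, which is triangular. We repeat the line argument there, treating $u_r(p)=0$ as the nilpotent case, where orbits go to $0$ and density fails.

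For supercyclicity with $\dim V\ge2$, a finite-dimensional operator on a space of dimension at least $2$ is supercyclic only in special cases. The delicate point is ensuring the nonzero invariant quotient has dimension $\ge2$. If only a one-dimensional quotient ever appears, I would use the closed hyperplane $\Ker$ of that functional. It is invariant, and restricting (or iterating the jet construction on it) yields a second independent functional, so a two-dimensional triangular quotient $\begin{pmatrix}a&b\\0&c\end{pmatrix}$ arises. Such a matrix is never supercyclic, since its projective orbits accumulate only on eigenlines. Making this second step rigorous is where I expect most of the work.
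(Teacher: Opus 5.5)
\textbf{There is a genuine gap in the supercyclic case, which you partly acknowledge.} Your point-evaluation step is correct as far as it goes. The missing piece is the finite-dimensional input. You write that an operator on a space of dimension $\ge 2$ is supercyclic ``only in special cases''. In fact, over $\mathbb{C}$ it is never supercyclic (Hilden--Wallen; see \cite{Galaz-Fontes2013}), and this general fact is what closes the argument, not any explicit form of the return map.

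Your heuristic for the $2\times 2$ triangular case, that projective orbits accumulate only on eigenlines, is false. For $\mathrm{diag}(e^{i\theta},1)$ with $\theta/2\pi$ irrational, the projective orbit of $(1,1)$ is dense in the circle $\{[e^{i\phi}:1]\}$, which contains neither eigenline. Non-supercyclicity there comes from the invariant $|z_1/z_2|$.

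The same issue affects your hypercyclic case when $\Phi(V)=0$. On a higher jet quotient the return map is $u_r(p)$ times a symmetric power of ${\rm d}(f^r)_p$, not a scalar, so the ``finite union of lines'' argument does not transfer. What you need is that no operator on a nonzero finite-dimensional space is hypercyclic. Pairing the orbit with an eigenvector of the adjoint reduces this to your geometric-sequence observation.

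\textbf{The second gap is how you produce an invariant quotient of dimension $\ge 2$.} Restricting to the invariant hyperplane $H$ gives nothing. A supercyclic vector never lies in a proper closed invariant subspace, so supercyclicity says nothing about $uC_f|_H$. Your alternative, iterating the jet construction, is the right idea, and it needs no case analysis.

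Put $W_n:=\bigcap_{j=0}^{r-1}V_{f^j(p),n}$. Each $W_n$ is closed, $uC_f$-invariant, and of finite codimension, and $\bigcap_n W_n=\{0\}$ by the identity theorem (the same fact you use to find $n_0$). For any two-dimensional $E\subset V$, the decreasing chain $E\cap W_n$ must reach $\{0\}$, so $\dim V/W_n\ge 2$ for large $n$. Since $D(uC_f)$ is dense, the induced map is an everywhere-defined operator on the finite-dimensional space $V/W_n$, with supercyclic vector $x+W_n$. This contradicts the finite-dimensional theorem.

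This is the paper's proof. Your evaluation and first-order-jet quotients are just the cases $n=1,2$ of the same construction.
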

Let $\delta_p: \mathcal{O}(X)\to \mathbb{C};~h\mapsto h(p)$ be the evaluation functional at $p$ and let $\mathcal{P}_r(f) := \bigl\{p\in X : f^r(p)=p\bigr\}$ be the set of periodic points of $f$ whose period divides $r$.
Then, we have the following result on cyclicity:
\begin{theorem}[cyclicity]\label{thm:cyclic-local}
Let $V\subset \mathcal{O}(X)$ be a complex topological vector space with continuous inclusion.
Assume that $\{\delta_p|_V : p \in X\}$ is linearly independent in the continuous dual space $V'$.
Assume that $uC_f$ is cyclic on $V$.
Then, for every $r\ge 1$ and every $\lambda\in\mathbb{C}$,
\[
\#\bigl(\mathcal{P}_r(f) \cap u_r^{-1}(\lambda)\bigr)\le r.
\]
In particular, in the unweighted case ($u \equiv 1$), for $r \ge 1$, we have
\[
\#\mathcal{P}_r(f)\le r
\]
% hence there exists at most one periodic orbit of exact length $r$ for each $r\ge 1$.
\end{theorem}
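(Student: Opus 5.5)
The plan is to dualize. Set $S:=uC_f$ and let $x\in D_\infty(S)$ be a cyclic vector. For $p\in\mathcal{P}_r(f)$, write $\delta_p$ for the continuous functional $\delta_p|_V\in V'$; it is continuous because $V\hookrightarrow\mathcal{O}(X)$ is continuous.

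The first step is the basic identity. For $h\in D(S^r)$ we have
\[
S^r h=u_r\cdot(h\circ f^r),
\]
which follows by induction on $r$ from $S^{k+1}h=u\cdot (S^kh)\circ f$. Hence, for $p\in\mathcal{P}_r(f)$ and every $n\ge0$,
\[
\delta_p(S^{n+r}x)=u_r(p)\,\delta_p(S^nx).
\]
Here $x\in D_\infty(S)$ ensures $S^nx\in D(S^r)$.

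Now fix $\lambda$ and suppose, for contradiction, that $p_1,\dots,p_{r+1}$ are distinct points of $\mathcal{P}_r(f)\cap u_r^{-1}(\lambda)$. Consider the linear map
\[
\Phi:\mathbb{C}^{r+1}\to\mathbb{C}^{\mathbb{N}},\qquad c\mapsto \Bigl(\textstyle\sum_i c_i\,\delta_{p_i}(S^nx)\Bigr)_{n\ge0}.
\]
By the identity above, every sequence in the image satisfies $a_{n+r}=\lambda a_n$. Such a sequence is determined by $a_0,\dots,a_{r-1}$, so the image has dimension at most $r$. Therefore there is a nonzero $c\in\mathbb{C}^{r+1}$ with $\Phi(c)=0$.

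Set $\phi:=\sum_i c_i\delta_{p_i}\in V'$. Then $\phi$ vanishes on every $S^nx$, hence on $\spanC\{S^nx:n\ge0\}$. This span is dense and $\phi$ is continuous, so $\phi=0$ on $V$. This contradicts the assumed linear independence of $\{\delta_p|_V\}$ in $V'$, since the $p_i$ are distinct and $c\ne0$. Hence $\#\bigl(\mathcal{P}_r(f)\cap u_r^{-1}(\lambda)\bigr)\le r$.

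The unweighted case follows by taking $u\equiv1$. Then $u_r\equiv1$, so $\mathcal{P}_r(f)=\mathcal{P}_r(f)\cap u_r^{-1}(1)$, and the bound gives $\#\mathcal{P}_r(f)\le r$.

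I do not expect a genuine obstacle here. The only delicate points are domain bookkeeping (using $x\in D_\infty(S)$ so that all iterates are defined) and noting that the recurrence space has dimension exactly $r$, even when $\lambda=0$, where the sequences vanish from index $r$ on.
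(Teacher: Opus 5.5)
Your proof is correct and is essentially the paper's argument in transposed form. The paper uses the recurrence $\delta_{p_j}\bigl((uC_f)^{n+r}x\bigr)=\lambda\,\delta_{p_j}\bigl((uC_f)^n x\bigr)$ to show that the vectors $R((uC_f)^n x)$, with $R(h)=(h(p_1),\dots,h(p_N))$, span $\mathbb{C}^N$ (by density and surjectivity of $R$) yet lie in the span of the first $r$ of them; you instead bound the rank of the same matrix $\bigl(\delta_{p_i}(S^n x)\bigr)_{i,n}$ from the functional side, obtaining a nontrivial vanishing combination $\sum_i c_i\delta_{p_i}$, which only requires that a continuous functional vanishing on a dense subspace is zero.
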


We note that when $V$ is a Hilbert space, hence a reproducing kernel Hilbert space
of holomorphic functions, the linear independence of
$\{\delta_p|_V : p \in X\}$ is equivalent to the associated reproducing kernel
being strictly positive definite.

\medskip
In the one-variable case ($\dim X=1$), we can show that
Assumption~\ref{ass:image-condition-strong} for $(V,p)$ automatically holds for every $p$ if $V$ is infinite-dimensional (see Lemma~\ref{lem:one-dim-graded});
in particular, Assumption~\ref{ass:image-condition} holds for $(V,f,u,p)$ for any $f$, $u$, and $p$.
Using the crucial properties of one-variable holomorphic functions, we have the following theorem:
\begin{theorem}\label{thm:repelling-fixed-point-strong}
Assume that $\dim_{\mathbb C}X=1$.
Let $V\subset \mathcal{O}(X)$ be an infinite-dimensional quasi-Banach space
with continuous inclusion.
Let $f:X \to X$ be holomorphic and let $u\in\mathcal{O}(X)\setminus\{0\}$.
Assume that $uC_f$ is bounded on $V$.
Then, for every fixed point $p \in X$ of $f$, we have
\[
|f'(p)|\le 1.
\]
\end{theorem}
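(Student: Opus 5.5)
The plan is to argue by contradiction with a degree-shifting version of the graded argument behind Theorem~\ref{thm:bounded-local}. The point of shifting degrees is that $u$ may vanish at $p$, so the hypothesis $u(p)\neq0$ of Theorem~\ref{thm:bounded-local} is not available.

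\textbf{Setup.} Fix a local coordinate $z$ centred at $p$, so $p$ corresponds to $z=0$, and write $\lambda:=f'(p)$. Suppose $|\lambda|>1$. Since $X$ is connected and $u\not\equiv 0$, $u$ has a finite vanishing order $k\ge0$ at $p$. So $u=cz^k+O(z^{k+1})$ with $c\neq0$, and $f(z)=\lambda z+O(z^2)$.

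\textbf{Step 1: the shifted graded map.} By Lemma~\ref{lem:one-dim-graded}, $V_{p,n}/V_{p,n+1}=\mathfrak m_p^n/\mathfrak m_p^{n+1}$ for every $n$, because $V$ is infinite-dimensional. Each quotient $Q_n:=V_{p,n}/V_{p,n+1}$ is therefore one-dimensional, spanned by the class of some $e_n\in V_{p,n}$ whose $n$-th Taylor coefficient at $p$ is $1$. Give $Q_n$ the quotient quasi-norm and set $\nu_n:=\|[e_n]\|_{Q_n}$. For $h=az^n+O(z^{n+1})$ we have
\[
T h:=u\cdot(h\circ f)=ca\lambda^n z^{n+k}+O(z^{n+k+1}).
\]
Hence $T:=uC_f$ maps $V_{p,n}$ into $V_{p,n+k}$ and $V_{p,n+1}$ into $V_{p,n+k+1}$. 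It thus induces $Q_n\to Q_{n+k}$, which in the above bases is multiplication by $c\lambda^n$. Boundedness of $T$ passes to the quotients and gives
\[
|c|\,|\lambda|^n\,\nu_{n+k}\le \|T\|\,\nu_n .
\]
When $k=0$ this is exactly the self-map of $Q_n$ used in Theorem~\ref{thm:bounded-local}.

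\textbf{Step 2: superexponential decay.} Fix $n_0\ge 1$ and iterate the inequality along $n=n_0+jk$ when $k\ge1$, or along consecutive $n$ when $k=0$. With $C:=\|T\|/|c|$ this gives
\[
\nu_{n_0+jk}\le C^j|\lambda|^{-\sum_{i<j}(n_0+ik)}\nu_{n_0}\quad(k\ge1),\qquad \nu_n\le C^n|\lambda|^{-n(n-1)/2}\nu_0\quad(k=0).
\]
In both cases, since $|\lambda|>1$, $\nu_m$ decays superexponentially along an infinite sequence of indices $m$.

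\textbf{Step 3: exponential lower bound.} Choose $R>0$ such that the closed coordinate disc $\{|z|\le R\}$ lies in the chart. Because $\iota$ is continuous and $V$ is quasi-normed, there is a constant $A_R$ with $\sup_{|z|\le R}|h|\le A_R\|h\|_V$ for all $h\in V$. This uses that a continuous linear map from a quasi-normed space into a locally convex space is bounded from the quasi-norm into each seminorm. Cauchy's estimate then gives $|a_n(h)|\le A_R R^{-n}\|h\|_V$, where $a_n(h)$ is the $n$-th Taylor coefficient at $p$. Every representative $h$ of $[e_n]$ lies in $e_n+V_{p,n+1}$, so $a_n(h)=1$. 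Taking the infimum over representatives yields $\nu_n\ge R^n/A_R$.

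\textbf{Conclusion and main obstacle.} The lower bound of Step 3 grows at most exponentially (it is at least geometric in $n$), while Step 2 forces superexponential decay, which is a contradiction. Hence $|f'(p)|\le1$. The only delicate point I expect is the case $u(p)=0$: the induced graded map is no longer an endomorphism of a single $Q_n$, so Theorem~\ref{thm:bounded-local} cannot simply be quoted. The remedy is the shifted chain $Q_n\to Q_{n+k}$, whose multipliers $\lambda^n$ grow with $n$. This growth is what makes Step 2 work for every $k\ge0$, using only $c\ne0$, which in turn follows from $u\not\equiv0$ and the connectedness of $X$.
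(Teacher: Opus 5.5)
Your approach is the paper's: shifted graded maps $Q_n\to Q_{n+k}$ with multiplier $c\lambda^n$, iterated to produce a factor of order $|\lambda|^{kj^2/2}$ that beats the exponential Cauchy-estimate bound. Your $\nu_n$ is just $\|\phi_n\|^{-1}$ for the paper's coefficient functional $\phi_n$. However, two steps do not hold as written, though both are easily repaired.

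\textbf{First slip: the lemma is misquoted.} Lemma~\ref{lem:one-dim-graded} gives $V_{p,n}/V_{p,n+1}=\mathfrak m_p^n/\mathfrak m_p^{n+1}$ only for infinitely many $n$, not for every $n$. For example, take a space of even functions with $p=0$. So $e_n$ and $\nu_n$ may be undefined, and ``fix $n_0\ge1$'' does not suffice. The repair is already in your Step~1. Pick $n_0$ with $Q_{n_0}\neq0$. Since $T$ sends any $h\in V_{p,n}$ with $a_n(h)=1$ to an element of $V_{p,n+k}$ whose $(n+k)$-th coefficient is $c\lambda^n\neq0$, induction gives $Q_{n_0+jk}\neq0$ for all $j$. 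The paper sidesteps this by bounding the functionals $\phi_j$ directly: the estimate $\|\phi_j\|\le C_RR^{-j}$ holds whether or not $B_{p,j}$ is zero.

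\textbf{Second slip: the case $k=0$.} Here the induced map is $Q_n\to Q_n$, so your inequality reads $|c|\,|\lambda|^n\nu_n\le\|T\|\,\nu_n$. It links no consecutive indices, and the bound $\nu_n\le C^n|\lambda|^{-n(n-1)/2}\nu_0$ does not follow from it. What does follow, after dividing by $\nu_n>0$ (Step~3), is $|c|\,|\lambda|^n\le\|T\|$ for each of the infinitely many $n$ with $Q_n\neq0$. That is already a contradiction. This is precisely the argument of Theorem~\ref{thm:bounded-local}, which the paper simply invokes when $u(p)\neq0$. It is also the only place where infinite-dimensionality of $V$ is genuinely needed, since for $k\ge1$ a single nonzero $Q_{n_0}$ suffices.
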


As a result, we completely determine the possible symbols of bounded weighted composition operators on this general class of quasi-Banach spaces of entire functions.
\begin{corollary}\label{cor:repelling-fixed-point-strong-entire}
Let $X=\mathbb{C}$.
Let $V\subset \mathcal{O}(\mathbb{C})$ be an infinite-dimensional quasi-Banach space
with continuous inclusion.
Let $f:\mathbb{C}\to\mathbb{C}$ be entire, let $u\in\mathcal{O}(\mathbb{C})\setminus\{0\}$,
and assume that $uC_f$ is bounded on $V$.
Then, there exists $a, b \in \mathbb{C}$ with $|a| \le 1$ such that $f(z) = az + b$.
\end{corollary}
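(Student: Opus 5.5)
The plan is to show that a non-affine $f$ would have a repelling fixed point, which Theorem~\ref{thm:repelling-fixed-point-strong} forbids, and then to treat the affine case directly.

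\emph{Step 1: polynomials of degree at least two.} Suppose $f$ is a polynomial of degree $d\ge 2$. Its fixed points are the roots of $f(z)-z$, counted with multiplicity. The holomorphic fixed point index formula gives
\[
\sum_{f(p)=p}\frac{1}{1-f'(p)}=0,
\]
where the sum is over fixed points with $f'(p)\neq 1$. We want a fixed point with $|f'(p)|>1$, so argue by contradiction and assume $|f'(p)|\le 1$ at every fixed point. If some fixed point had multiplier exactly $1$, I would instead use the standard fact that every polynomial of degree $\ge 2$ has a repelling fixed point or a multiplier-one fixed point. To handle that case cleanly, I would switch to periodic points of higher period; this is the main obstacle. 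The cleanest route is to pass to $f^n$ and apply Theorem~\ref{thm:repelling-fixed-point-strong} to the bounded operator $(uC_f)^n=u_nC_{f^n}$, where $u_n=\prod_{j<n} u\circ f^j$. This operator is bounded, and $u_n\not\equiv 0$ because $u\neq0$, $f$ is nonconstant and zeros are isolated. Since repelling periodic points of a polynomial of degree $\ge2$ are dense in the Julia set, some $f^n$ has a fixed point $p$ with $|(f^n)'(p)|>1$, contradicting Theorem~\ref{thm:repelling-fixed-point-strong} applied to $f^n$.

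\emph{Step 2: transcendental entire $f$.} The same argument works here. Repelling periodic points are dense in the (nonempty) Julia set of a transcendental entire function; this is Baker's theorem, and more elementary versions exist via Ahlfors islands. Hence some iterate $f^n$ has a repelling fixed point, and Theorem~\ref{thm:repelling-fixed-point-strong} applied to $u_nC_{f^n}$ again gives a contradiction. Together with Step 1, this shows that $f$ is affine, say $f(z)=az+b$, or constant.

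\emph{Step 3: the affine and constant cases.} For $f(z)=az+b$ with $a\neq1$, the map has the fixed point $p=b/(1-a)$ with $f'(p)=a$, so Theorem~\ref{thm:repelling-fixed-point-strong} gives $|a|\le1$. If $a=1$ the bound holds trivially. If $f$ is constant, it is affine with $a=0$.

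\emph{Why this order.} I would invoke the iterate trick $(uC_f)^n=u_nC_{f^n}$ uniformly in both non-affine cases. That avoids the delicate fixed-point-index argument, since it only needs the existence of some repelling periodic point for every non-affine entire function. That existence is classical, by Fatou for polynomials and Baker for transcendental entire functions. The only care needed is checking that $u_n\not\equiv0$, so that the hypothesis $u\in\mathcal{O}(X)\setminus\{0\}$ of Theorem~\ref{thm:repelling-fixed-point-strong} holds for $f^n$.
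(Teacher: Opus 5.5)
Your proposal is correct and follows the paper's proof. Both arguments use the classical existence of a repelling periodic point for every non-affine entire function (you split this into the Fatou/Baker density results for polynomials and transcendental maps, while the paper cites a single reference), apply Theorem~\ref{thm:repelling-fixed-point-strong} to the bounded operator $(uC_f)^r=u_rC_{f^r}$, and then handle the affine case through the fixed point $b/(1-a)$. Your explicit check that $u_r\not\equiv 0$ is a detail the paper leaves implicit, and the fixed-point-index digression in Step~1 can simply be dropped.
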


We note that we may take any infinite-dimensional reproducing kernel Hilbert space composed of entire functions as $V$, since it is always continuously included in the space of entire functions equipped with compact-open topology.

This theorem is a weighted counterpart of the affine-rigidity obtained in \cite{ISHIKAWA2023109048}.
We note that this affine-rigidity theorem holds even if $V$ is finite-dimensional in the unweighted case \cite[Theorem 1.2]{ISHIKAWA2023109048}.
However, the infinite-dimensionality is necessary in the general weighted case, for example, consider $V=\mathbb{C}e^{-z}$, $u = e^{z^2/2}$, and $f(z) = (z+1)^2/2$.

Weighted composition operators on spaces of entire functions have been extensively studied on concrete Fock-type and weighted Banach spaces; see, for example, \cite{Ueki2007,Ueki2010,Le2014,HaiKhoi2016,TienKhoi2019,ArroussiTong2019,BeltranMeneu2020,BeltranMeneuJorda2021,HaiRosenfeld2021,Bonet2022Survey}. 
Our methodology is different. 
Rather than seeking space-specific boundedness criteria, we develop a local dynamical mechanism that yields affine-symbol rigidity under very soft assumptions on the holomorphic function space.

\medskip
In the case of $X=\mathbb{C}^2$, we obtain a rigidity result for weighted composition operators whose symbols are polynomial automorphisms.
Let ${\rm M}_2(\mathbb{C})$ be the space of complex matrices of size $2$ and let ${\rm GL}_2(\mathbb{C})$ be the set of invertible matrices.
We define
\[
\mathcal{G}_2(V):=
\Big\{A\in {\rm GL}_2(\mathbb{C}) : \exists b\in \mathbb{C}^2,\ \exists w\in \mathcal{O}(\mathbb{C}^2),\ 
e^wC_{A(\cdot)+b}\text{ bounded on }V\Big\}.
\]
This is a subsemigroup of ${\rm GL}_2(\mathbb{C})$.

\begin{theorem}\label{thm:C2}
Let $X = \mathbb{C}^2$.
Let $V\subset \mathcal{O}(\mathbb{C}^2)$ be a quasi-Banach space with continuous inclusion.
Assume that the following two conditions hold:
\begin{enumerate}[label={\rm (\arabic*)}]
\item Assumption~\ref{ass:image-condition-strong} for $(V,p)$ holds for every $p \in \mathbb{C}^2$,
\item $\spanC\big(\mathcal{G}_2(V)\big)= {\rm M}_2(\mathbb{C})$.
\end{enumerate}
Let $f:\mathbb{C}^2\to \mathbb{C}^2$ be a polynomial automorphism.
If $uC_f$ is bounded on $V$ for some nonvanishing $u\in \mathcal{O}(\mathbb{C}^2)$, then $f$ is affine.
\end{theorem}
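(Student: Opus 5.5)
The plan is to combine Theorem~\ref{thm:bounded-local} with the classical structure theory of polynomial automorphisms of $\mathbb{C}^2$, namely the Jung--van der Kulk amalgamated product structure, the Friedland--Milnor classification, and the Bedford--Smillie theory of H\'enon maps. The key dynamical fact is that a polynomial automorphism of H\'enon type, i.e.\ conjugate to a composition of generalized H\'enon maps, has dynamical degree $\ge 2$ and possesses (infinitely many) saddle periodic points. At a saddle point $p$ of period $r$, the Jacobian ${\rm d}(f^r)_p$ has an eigenvalue of modulus $>1$.

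\emph{Step 1: the H\'enon-type case.} Suppose $f$ is of H\'enon type and $uC_f$ is bounded with $u$ nonvanishing. Take a saddle periodic point $p$ of period $r$. Then $u_r(p)\ne 0$ because $u$ is nonvanishing. By hypothesis (1), Assumption~\ref{ass:image-condition-strong} holds at $p$, hence Assumption~\ref{ass:image-condition} holds for $(V,f^r,u_r,p)$. Theorem~\ref{thm:bounded-local} then forces all eigenvalues of ${\rm d}(f^r)_p$ to have modulus $\le 1$, which is a contradiction. So such $f$ cannot occur.

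\emph{Step 2: reducing the remaining non-affine case to Step 1.} By Friedland--Milnor, a non-affine polynomial automorphism that is not of H\'enon type is affinely conjugate to an elementary map. So we may write $f=\alpha_1\circ e\circ\alpha_2$, where $\alpha_1,\alpha_2$ are affine automorphisms and $e(x,y)=(ax+b,\,cy+q(x))$ with $\deg q\ge 2$. The idea is to precompose with a bounded affine weighted composition operator. Pick $A\in\mathcal{G}_2(V)$ together with $b$ and $w$ such that $e^wC_g$ is bounded, where $g=A(\cdot)+b$. Then
\[
(e^wC_g)(uC_f)=e^{w}(u\circ g)\,C_{f\circ g}
\]
is bounded, and its weight $e^w(u\circ g)$ is nonvanishing. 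Moreover, $f\circ g$ is conjugate via $\alpha_1$ to $e\circ\beta$ with $\beta=\alpha_2\circ g\circ\alpha_1$. If the linear part of $\beta$ does not lie in the Borel subgroup $B$ of matrices preserving the fibration $\{x=\text{const}\}$ of $e$, then $e\circ\beta$ is a reduced word of length two in the amalgamated product. Hence it is of H\'enon type with dynamical degree $\deg q\ge2$, and so is $f\circ g$. Step 1 applied to $f\circ g$ then yields the contradiction.

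\emph{Step 3: the main obstacle.} It remains to produce $A\in\mathcal{G}_2(V)$ with $L_2AL_1\notin B$, where $L_i$ is the linear part of $\alpha_i$. The set of bad $A$ is $L_2^{-1}BL_1^{-1}$, which is contained in the linear subspace $L_2^{-1}\mathfrak{b}L_1^{-1}$. Here $\mathfrak b$ denotes the $3$-dimensional space of upper triangular matrices (in the appropriate coordinates). If all of $\mathcal{G}_2(V)$ were bad, its span would lie in this $3$-dimensional proper subspace of ${\rm M}_2(\mathbb{C})$, contradicting hypothesis (2). I expect this to be exactly where hypothesis (2) enters, and the delicate points are these:
\begin{enumerate}[label={\rm (\roman*)}]
\item making sure the Friedland--Milnor normal form and the reduced-word criterion for H\'enon type are stated correctly (including the translation parts, which do not affect the linear-part criterion);
\item citing the existence of saddle periodic points for H\'enon-type maps, via Bedford--Smillie: saddle points equidistribute to the measure of maximal entropy $\log d>0$.
\end{enumerate}
Combining Steps 1--3, $f$ must be affine.
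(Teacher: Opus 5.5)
Your strategy is the same as the one the paper imports from the unweighted proof of \cite[Theorem~1.3]{ISHIKAWA2023109048}: compose $uC_f$ with a bounded weighted affine operator $e^wC_g$, where $g=A(\cdot)+b$ and $A\in\mathcal G_2(V)$ is chosen using hypothesis (2), so that $f\circ g$ becomes of H\'enon type. You then play a saddle periodic point (from \cite{BS92}) against Theorem~\ref{thm:bounded-local}. Step~1 and the linear algebra of Step~3 are fine.

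The gap is in Step~2. You claim that a non-affine automorphism not of H\'enon type is \emph{affinely} conjugate to an elementary map, so that $f=\alpha_1\circ e\circ\alpha_2$ with $\alpha_i$ affine. This is false: Friedland--Milnor only give a conjugacy by some polynomial automorphism. For instance, take $e_1(x,y)=(x,y+x^2)$, $\sigma(x,y)=(y,x)$, $e(x,y)=(x,y+x^3)$, and
$f=e_1\circ\sigma\circ e\circ\sigma\circ e_1^{-1}=\psi\circ e\circ\psi^{-1}$ with $\psi=e_1\circ\sigma$.
Then $\deg f^n$ stays bounded, so $f$ is not of H\'enon type. Let $\mathcal A$ be the affine group, $\mathcal E$ the elementary group, and $\mathcal S=\mathcal A\cap\mathcal E$. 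The reduced word of $f$ in $\mathcal A*_{\mathcal S}\mathcal E$ has three letters from $\mathcal E\setminus\mathcal S$, while $\alpha_1\circ e'\circ\alpha_2$ has only one. By uniqueness of the normal form, $f$ is not of your form, and your argument as written says nothing about such maps.

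The fix is to run your idea on the Jung--van der Kulk reduced word of $f$ itself rather than on a Friedland--Milnor normal form. Write
$f=a_0e_1a_1\cdots e_ka_k$ with $k\ge1$, $e_i\in\mathcal E\setminus\mathcal A$, $a_1,\dots,a_{k-1}\in\mathcal A\setminus\mathcal E$, and $a_0,a_k\in\mathcal A$ arbitrary.
Then $a_0^{-1}\circ(f\circ g)\circ a_0=e_1a_1\cdots e_k\,(a_kga_0)$. This word is cyclically reduced of length $2k\ge2$ as soon as the linear part $L_kAL_0$ of $a_kga_0$ is not in your Borel subgroup $B$. In that case $f\circ g$ is of H\'enon type with dynamical degree $\prod_i\deg e_i\ge2$.

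The bad $A$ lie in the $3$-dimensional subspace $L_k^{-1}\mathfrak b L_0^{-1}$. So your Step~3 argument, with $L_0,L_k$ in place of $L_1,L_2$, produces a good $A\in\mathcal G_2(V)$. Step~1 applied to $f\circ g$ with nonvanishing weight $e^w(u\circ g)$ then finishes. This treats all non-affine $f$ uniformly, so the separate H\'enon-type case is no longer needed.
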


The rest of the paper is organized as follows.
In Section~\ref{sec:local} we introduce the jet filtration and prove the local weighted results.
In Section~\ref{sec:C} we prove the one-variable results.
In Section~\ref{sec:global-affine} we prove the affine-rigidity theorem in several
complex variables.
In Section~\ref{sec:C2} we prove the two-dimensional rigidity theorem.
In Appendix~\ref{app:comparison} we compare the graded image condition with the older kernel formulation.

\section{Local rigidity at periodic points}\label{sec:local}
For $p\in X$ and $n\ge 0$, we put
\[
A_{p,n}:=\mathfrak m_p^n\big/\mathfrak m_p^{n+1},
\qquad
B_{p,n}:=V_{p,n}\big/V_{p,n+1}\subset A_{p,n}.
\]
In other words, $A_{p,n}$ is the space of homogeneous holomorphic $n$-jets at $p$ (see \cite[Section 4]{KPS_NatOpMan93}), and
$B_{p,n}$ is the corresponding subspace induced from $V$.

We note the following formulas for the jet spaces, which immediately follow from the definition:
\begin{align}
    uf^*(\mathfrak m_{f(p)}^n) &\subset \mathfrak m_p^n, \\
    \bigcap_{n\ge 0}V_{p,n} &= \{0\},\label{eq:local-intersection-zero}
\end{align}
In particular, $uf^*$ induces a well-defined linear map $\gr_p^n(uf^*):A_{f(p),n}\to A_{p,n}$ for every $n\ge 0$ introduced in \eqref{eq:local-graded-map}.
Moreover, under Assumption \ref{ass:image-condition} for $(V,f,u,p)$, for infinitely many $n$, the restriction of $\gr_p^n(uf^*)$ induces a linear map $B_{f(p),n}\to B_{p,n}$, which we denote by $\gr_p^n(uC_f)$. 

The next lemma provides the graded action of $uf^*$ on the jet spaces at $f(p)$ and $p$.
\begin{lemma}\label{lem:eigenvalue-local-graded-action}
Let $p$ be a fixed point of $f$, and let $\lambda_1,\dots,\lambda_d$ be the
eigenvalues of ${\rm d}f_p$, counted with algebraic multiplicity.
Let $n\ge 0$.
Then, for every $n_1,\dots,n_d\ge 0$ with $n_1+\cdots+n_d=n$, the complex number
\[
u(p)\lambda_1^{n_1}\cdots\lambda_d^{n_d}
\]
is an eigenvalue of $\gr_p^n(uf^*)$.
In addition, if ${\rm Im}\,\gr_{p}^n(uf^*) \subset V_{p,n}/V_{p,n+1}$ and 
\[
u(p)\lambda_1^{n_1}\cdots\lambda_d^{n_d}\neq 0
\]
hold, then the same complex number is an eigenvalue of $\gr_p^n(uC_f)$.
\end{lemma}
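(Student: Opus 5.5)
Since $p$ is a fixed point, $\gr_p^n(uf^*)$ is an endomorphism of the finite-dimensional space $A_{p,n}$, and the plan is to compute it in local coordinates. Choose holomorphic coordinates $z=(z_1,\dots,z_d)$ centred at $p$. Then $A_{p,n}$ is identified with the space $\mathrm{Sym}^n$ of homogeneous polynomials of degree $n$ in $z$: a germ $h\in\mathfrak m_p^n$ is sent to its degree-$n$ Taylor part. Write $f(z)=Lz+O(|z|^2)$ with $L={\rm d}f_p$, and $u(z)=u(p)+O(|z|)$. For $h=h_n+O(|z|^{n+1})$ we get $h\circ f=h_n(Lz)+O(|z|^{n+1})$, and multiplying by $u$ gives
\[
\gr_p^n(uf^*)(h_n)=u(p)\,h_n\circ L .
\]
So $\gr_p^n(uf^*)$ is $u(p)$ times the map $\mathrm{Sym}^n(L^{T})$ induced by $L$ on degree-$n$ polynomials, i.e.\ on $\mathrm{Sym}^n$ of the cotangent space.

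Next we identify the eigenvalues of $P\mapsto P\circ L$. After a linear change of coordinates (which only conjugates this operator) we may assume $L$ is upper triangular with diagonal $\lambda_1,\dots,\lambda_d$. Order the monomials $z^\beta$ suitably (e.g.\ lexicographically, respecting the triangular structure). Then $(Lz)^\beta=\lambda^\beta z^\beta+(\text{other monomials})$, and the other monomials all lie on one side of $z^\beta$ in this order. Hence the matrix is triangular with diagonal entries $\lambda_1^{\beta_1}\cdots\lambda_d^{\beta_d}$.

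The only point needing care is choosing the order that makes the matrix triangular. Here is a cleaner alternative. The characteristic polynomial of $P\mapsto P\circ L$ depends continuously on $L$. For diagonalizable $L$ the monomials in an eigenbasis are eigenvectors with eigenvalues $\lambda^\beta$. Diagonalizable matrices are dense, so the spectrum statement (with multiplicities) extends to all $L$. Either route shows that every $u(p)\lambda^\beta$ with $|\beta|=n$ is an eigenvalue of $\gr_p^n(uf^*)$, which is the first claim.

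For the second claim, write $T=\gr_p^n(uf^*)$ and $B=B_{p,n}$. By hypothesis ${\rm Im}\,T\subset B$, so $T$ restricts to the endomorphism $\gr_p^n(uC_f)$ of $B$. Let $\mu=u(p)\lambda^\beta\neq0$, and take an eigenvector $v\in A_{p,n}$ with $Tv=\mu v$. Then $v=\mu^{-1}Tv\in{\rm Im}\,T\subset B$. So $v$ is a nonzero vector of $B$ with $\gr_p^n(uC_f)v=\mu v$, and $\mu$ is an eigenvalue of $\gr_p^n(uC_f)$.

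The main obstacle is only the bookkeeping of the triangularity or density argument in the first step. The restriction step is immediate once $\mu\neq0$ is used, and it fails for $\mu=0$, which is why that hypothesis appears.
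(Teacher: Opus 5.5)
Your proof is correct and follows the paper's argument. You identify $\gr_p^n(uf^*)$ in local coordinates with $P\mapsto u(p)\,P\circ {\rm d}f_p$ on degree-$n$ homogeneous polynomials, and you obtain the restriction claim from $a=\mu^{-1}\gr_p^n(uf^*)a\in B_{p,n}$. Your lexicographic triangularization (or the density argument) fills in the eigenvalue computation that the paper asserts with a bare ``hence''.
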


\begin{proof}
Let $L_{p,n}:=\gr_p^n(uf^*):A_{p,n}\to A_{p,n}$.
Choose local holomorphic coordinates at $p$, namely, we fix a holomorphic isomorphism from an open neighborhood of $p$ onto an open neighborhood of $0$ in $\mathbb{C}^d$.
Let $\mathcal{P}_n$ be the space of homogeneous polynomials of degree $n$ in these coordinates.
Then, via this identification, $L_{p,n}$ sends $P\in\mathcal{P}_n$ to $u(p)\,P\bigl({\rm d}f_p(\cdot)\bigr)$.
Hence, for every tuple of non-negative integers $(n_1,\dots,n_d)$,
\[
u(p)\lambda_1^{n_1}\cdots\lambda_d^{n_d}
\]
is an eigenvalue of $L_{p,n}=\gr_p^n(uf^*)$.
Assume $L_{p,n}(A_{p,n})\subset B_{p,n}$.
Let
\[
\mu:=u(p)\lambda_1^{n_1}\cdots\lambda_d^{n_d},
\]
and assume $\mu\neq 0$.
Choose $a\in A_{p,n}\setminus\{0\}$ such that
\[
L_{p,n}a=\mu a.
\]
Since $\mu\neq 0$, we have
\[
a=\mu^{-1}L_{p,n}a\in B_{p,n}.
\]
Therefore, $a$ is an eigenvector of $\gr_p^n(uC_f)$ with eigenvalue $\mu$.
This proves the second assertion.
\end{proof}

Now, we provide the proof of Theorem~\ref{thm:bounded-local}.

\begin{proof}[Proof of Theorem~\ref{thm:bounded-local}]
Set $S:=u_rC_{f^r}$ and $S_{p,n} := \gr_p^n(u_rC_{f^r})$.
Let $\alpha$ be an eigenvalue of ${\rm d}(f^r)_p$.
If $\alpha=0$, then the conclusion is immediate.
Assume from now on that $\alpha\neq 0$.

First, we prove the statement on boundedness.
By Assumption \ref{ass:image-condition} for $(V,f^r,u_r,p)$, there exist infinitely many
$n\ge 1$ such that
\[
\gr_p^n\bigl(u_r(f^r)^*\bigr)(A_{p,n})\subset B_{p,n}.
\]
For each such $n$, Lemma~\ref{lem:eigenvalue-local-graded-action} shows that $u_r(p)\alpha^n$ is an eigenvalue of the induced operator $\gr_p^n(u_rC_{f^r})$.
Since $S_{p,n}$ is a quotient of $u_r C_{f^r}$, we have
\[
\|S_{p,n}\|\le \|S|_{V_{p,n}}\|\le \|S\|.
\]
Hence
\begin{align}
    |u_r(p)|\,|\alpha|^n \le \|S|_{V_{p,n}} \|\le \|S\| \label{eq:eigenvalue-bound}
\end{align}
for infinitely many $n$.
Because $u_r(p)\neq 0$, this is possible only if $|\alpha|\le 1$.

Next, we prove the statement on compactness.
We claim that
\begin{align}\label{eq:local-restriction-norm-zero}
\|S|_{V_{p,n}}\|\longrightarrow 0
\qquad (n\to\infty).
\end{align}
We prove this by contradiction.
Suppose there exist $\varepsilon>0$ and $h_n\in V_{p,n}$ such that
\[
\|h_n\|_V = 1,
\qquad
\|Sh_n\|_V\ge \varepsilon
\]
for infinitely many $n$.
Since $S$ is compact, there exists a subsequence $(Sh_{n_k})$ converging in $V$ to an element $g\in V$.
Fix $m\ge 0$.
For all sufficiently large $k$, we have $n_k\ge m$, thus
\[
h_{n_k}\in V_{p,n_k}\subset V_{p,m}.
\]
Since $f^r(p)=p$, the subspace $V_{p,m}$ is $S$-invariant, so
\[
Sh_{n_k}\in V_{p,m}
\]
for all sufficiently large $k$.
Since $V_{p,m}$ is closed, we obtain $g\in V_{p,m}$.
As $m$ is arbitrary, \eqref{eq:local-intersection-zero} yields $g=0$.
But this contradicts the assumption $\|Sh_{n_k}\|_V\ge \varepsilon$ for all $k$.
Thus, the claim \eqref{eq:local-restriction-norm-zero} holds.
Since the inequality $|u_r(p)|\,|\alpha|^n \le \|S|_{V_{p,n}} \|$ holds for infinitely many $n$ by the same argument as above, we have $|\alpha|<1$.
\end{proof}

Before proving Theorem~\ref{thm:hyper-local}, we prepare the following lemma:
\begin{lemma}\label{lem:hypercyclic-finite-dimension}
    Let $A$ be a linear map on a finite-dimensional complex linear space $W$.
    If $A$ is hypercyclic (resp. supercyclic), then $\dim W = 0$ (resp. $\dim W \le 1$).
\end{lemma}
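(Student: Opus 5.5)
The plan is to work with the Jordan normal form of $A$ and use the fact that a hypercyclic or supercyclic operator on a finite-dimensional space would have to have dense orbits (or dense projective orbits). I handle hypercyclicity first, then supercyclicity.

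\emph{Hypercyclic case.} Suppose $\dim W=d\ge 1$ and $x$ is a hypercyclic vector. Then $\{x,Ax,A^2x,\dots\}$ is dense, so its span is all of $W$. Hence $x$ is a cyclic vector and $W\cong\mathbb{C}[t]/(m(t))$, where $m$ is the minimal polynomial of $A$. Since $\dim W\ge1$, $A$ has an eigenvalue $\mu$, so $A^*$ (the transpose on the dual $W'$) has an eigenvector $\phi\ne0$ with $\phi\circ A=\mu\phi$. Applying the surjective functional $\phi$ to the dense orbit shows that $\{\mu^n\phi(x):n\ge0\}$ is dense in $\mathbb{C}$. But a sequence $\mu^n c$ is never dense in $\mathbb{C}$:
\begin{itemize}
\item if $|\mu|\le1$ it is bounded;
\item if $|\mu|>1$ it tends to $\infty$, so it has only finitely many points in any bounded set, hence is nowhere dense.
\end{itemize}
This contradiction gives $\dim W=0$.

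\emph{Supercyclic case.} Suppose $\dim W\ge2$ and $x$ is supercyclic. First, every eigenvalue $\mu$ of $A$ with eigenvector $\phi$ of $A^*$ (as above) gives a functional for which $\{\lambda\mu^n\phi(x)\}$ must be dense in $\mathbb{C}$. That is automatic, so a single functional gives no obstruction. I therefore use two functionals. Since $\dim W\ge2$, I pick a surjective linear map $\pi:W\to\mathbb{C}^2$ intertwining $A$ with a $2\times2$ matrix $B$ in Jordan form. Such a $\pi$ exists because $W'$ contains an $A^*$-invariant subspace of dimension $2$ (take a $2$-dimensional invariant subspace of the upper-triangularized $A^*$), and dualizing gives $\pi$ with $\pi A=B\pi$. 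Then $\pi(x)$ is a supercyclic vector for $B$ on $\mathbb{C}^2$, so it suffices to rule out supercyclicity on $\mathbb{C}^2$.

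\emph{Reduction to $\mathbb{C}^2$.} Density of $\{\lambda B^n y\}$ in $\mathbb{C}^2$ means the set of lines $[B^n y]$ is dense in $\mathbb{P}^1$, because scaling sweeps out each line.
\begin{itemize}
\item If $B=\mathrm{diag}(\mu_1,\mu_2)$, then $y$ must have both coordinates nonzero; otherwise the orbit lies on a coordinate axis. If $\mu_1=0$ or $\mu_2=0$, the lines are eventually constant. Otherwise the ratio of the coordinates is $(\mu_2/\mu_1)^n c$, which is a sequence in $\mathbb{C}\subset\mathbb{P}^1$ and never dense by the argument from the hypercyclic case.
\item If $B$ is a Jordan block with eigenvalue $\mu$ and $\mu=0$, then $B^2=0$ and the lines are finite in number. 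If $\mu\ne0$, the ratio of the coordinates of $B^n y$ is $c+n/\mu$ (after normalizing), which lies on a discrete arithmetic progression (or is constant) and is not dense.
\end{itemize}
In every case we get a contradiction, so $\dim W\le1$.

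The main obstacle is justifying the invariant quotient $\pi$. I will do it via a triangular basis of $A^*$: the span of its first two vectors is $A^*$-invariant, and its annihilator $N\subset W$ is $A$-invariant with $W/N\cong\mathbb{C}^2$. Everything else reduces to the elementary fact that $\{c\,\mu^n\}$ and $\{c+n\nu\}$ are never dense in $\mathbb{C}$.
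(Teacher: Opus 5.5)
Your proof is correct. The hypercyclic half is essentially the paper's argument. The paper fixes an inner product and pairs the orbit with an eigenvector of the Hilbert adjoint, while you pair it with an eigenvector of the algebraic transpose. In both cases the orbit is mapped onto a geometric sequence that cannot be dense in $\mathbb{C}$. Your remark that $W\cong\mathbb{C}[t]/(m(t))$ is never used and can be dropped.

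The supercyclic half is where you differ. The paper gives no argument there and simply cites Galaz-Fontes. You prove it directly in three steps:
\begin{enumerate}
\item Pass to a $2$-dimensional $A$-invariant quotient $W/N$, with $N$ the annihilator of a $2$-dimensional $A'$-invariant subspace of $W'$.
\item Observe that supercyclicity survives the surjection $\pi$.
\item Rule out supercyclicity of a $2\times 2$ Jordan matrix through the induced sequence in $\mathbb{P}^1$, i.e.\ the coordinate ratio $c(\mu_2/\mu_1)^n$ or $c+n/\mu$.
\end{enumerate}
Your case analysis is complete: vanishing coordinate of $y$, zero eigenvalue, scalar matrix, nilpotent block. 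The passage from density in $\mathbb{C}^2$ to density of the ratio sequence in $\mathbb{C}$ is justified, since $\mathbb{C}^2\setminus\{0\}\to\mathbb{P}^1$ is a continuous surjection and $\mathbb{C}$ is open in $\mathbb{P}^1$.

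Your route makes the lemma self-contained. It shows that both halves rest on one elementary fact: the sequences $c\mu^n$ and $c+n\nu$ are never dense in $\mathbb{C}$. The hypercyclic case uses a $1$-dimensional invariant quotient, and the supercyclic case a $2$-dimensional one in which the free scalar absorbs exactly one dimension. The paper's citation is shorter but leaves the supercyclic case as a black box.
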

\begin{proof}
    We fix an inner product $\langle \cdot, \cdot \rangle_W$ of $W$.
    Suppose there exists a hypercyclic vector $x \in W$ for $A$.
    Then, the orbit $\{A^n x:n\ge 0\}$ is dense in $W$.
    Let $y$ be an eigenvector of $A^*$ with eigenvalue $\lambda$.
    Then, the set $\{\langle A^n x,y\rangle_W : n\ge 0\} = {\lambda^n \langle x,y\rangle_W: n\ge 0}$ is dense in $\mathbb{C}$, but it is impossible. 
    Thus, if $A$ is hypercyclic, we have $\dim W = 0$.

    As for the supercyclic case, see \cite{Galaz-Fontes2013}.
\end{proof}

Then, we provide the proof of Theorem~\ref{thm:hyper-local}.
\begin{proof}[Proof of Theorem~\ref{thm:hyper-local}]
Assume that $f$ has a periodic point $p \in X$ of period $r\ge 1$.
For each $n\ge 0$, set
\[
W_n := \bigcap_{j=0}^{r-1} V_{f^j(p),n}.
\]
Then, each $W_n$ is a closed $uC_f$-invariant subspace of finite codimension.
Let $D(A_n):=(D(uC_f)+W_n)/W_n \subset V/W_n$ and define a well-defined linear map $A_n:D(A_n)\to V/W_n$ by $A_n(x+W_n):=uC_f x+W_n$ with $x\in D(uC_f)$.
Since $uC_f$ is supercyclic/hypercyclic, its domain $D(uC_f)$ is dense in $V$.
Hence, $D(A_n)$ is dense in the finite-dimensional space $V/W_n$, so $D(A_n)=V/W_n$.
Therefore, $A_n$ is a linear operator on $V/W_n$.

Let $x$ be a supercyclic (resp. hypercyclic) vector for $uC_f$.
Then, $x\notin W_n$ whenever $V/W_n\neq \{0\}$. In fact the orbit of $x$ would be contained in the proper closed subspace $W_n$, which is impossible.
Therefore, $x+W_n$ is supercyclic (resp. hypercyclic) for $A_n$ whenever $V/W_n\neq\{0\}$.
By Lemma~\ref{lem:hypercyclic-finite-dimension}, we obtain
\begin{align}
    \dim(V/W_n)\le 1
\qquad
(\text{resp. }\dim(V/W_n)=0) \label{eq:dim1or0}
\end{align}
for every $n\ge 0$.

Choose a two-dimensional (resp. one-dimensional) subspace
$E\subset V$.
Since $(E\cap W_n)_{n\ge 0}$ is a decreasing sequence of subspaces of the
finite-dimensional space $E$ and
\[
\bigcap_{n\ge 0}(E\cap W_n)
=
E\cap \bigcap_{n\ge 0}W_n
=
\{0\},
\]
there exists $n$ such that $E\cap W_n=\{0\}$.
Hence, the quotient map $E\to V/W_n$ is injective, namely $\dim(V/W_n)\ge 2$ (resp. $\dim(V/W_n)\ge 1$), contradicting \eqref{eq:dim1or0}.
\end{proof}

% Before proving Theorem~\ref{thm:cyclic-local}, we prepare the following lemma:
% \begin{lemma}\label{lem:local-cyclic-kernel}
% Let $W$ be a complex topological vector space and let $S$ be a cyclic continuous
% linear operator on $W$.
% Then, for every nonzero polynomial $P\in \mathbb{C}[z]$,
% \[
% \dim \Ker P(S')\le \deg P,
% \]
% where $S'$ is the dual operator of $S$ on the continuous dual space $W'$.
% \end{lemma}
% \begin{proof}
% Let $P(z)=a_0+a_1z+\cdots+a_m z^m$ with $a_m\neq 0$, where $m:=\deg P$.
% Suppose that $\ell_1,\dots,\ell_N\in \Ker P(S')$ are linearly independent.
% We claim that $N \le m$.
% Define
% \[
% R:W\to \mathbb{C}^N,
% \qquad
% R(w):=\bigl(\ell_1(w),\dots,\ell_N(w)\bigr).
% \]
% Since the functionals $\ell_1,\dots,\ell_N$ are linearly independent, $R$ is surjective.
% Let $x\in W$ be a cyclic vector for $S$.
% Since $\spanC\{S^n x:n\ge 0\}$ is dense in $W$ and $R$ is continuous and surjective, we see that $\spanC\{R(S^n x):n\ge 0\}=\mathbb{C}^N$.
% For each $i=1,\dots,N$ and each $n\ge 0$, the identity $P(S')\ell_i=0$ provides
% \[
% 0=(P(S')\ell_i)(S^n x)
%  =\sum_{j=0}^m a_j\,\ell_i(S^{n+j}x).
% \]
% Thus, we have $\sum_{j=0}^m a_j\,R(S^{n+j}x)=0$ for all $n\ge 0$.
% Since $a_m\neq 0$, every vector $R(S^{n+m}x)$ is a linear combination of $R(x),R(Sx),\dots,R(S^{m-1}x)$.
% Hence
% \[
% \mathbb{C}^N=\spanC\{R(x),R(Sx),\dots,R(S^{m-1}x)\}.
% \]
% Therefore, $N\le m$.
% \end{proof}
We provide the proof of Theorem~\ref{thm:cyclic-local}.
\begin{proof}[Proof of Theorem~\ref{thm:cyclic-local}]
Let $x\in D_\infty(uC_f)$ be a cyclic vector for $uC_f$.
Fix $r\ge 1$ and $\lambda\in\mathbb{C}$, and set
\[
E_{r,\lambda}:=\mathcal{P}_r(f) \cap u_r^{-1}(\lambda) = \{p\in X : f^r(p)=p,\ u_r(p)=\lambda\}.
\]
Choose distinct points $p_1,\dots,p_N\in E_{r,\lambda}$.
By assumption, the functionals
\[
\delta_{p_1}|_V,\dots,\delta_{p_N}|_V
\]
are linearly independent.
Define
\[
R:V\to \mathbb{C}^N,
\qquad
R(h):=\bigl(h(p_1),\dots,h(p_N)\bigr).
\]
Then $R$ is continuous and surjective.
Since $x$ is cyclic, $\spanC\{(uC_f)^n x:n\ge 0\}$ is dense in $V$.
Hence
\[
\spanC\{R((uC_f)^n x):n\ge 0\}=\mathbb{C}^N.
\]
For each $j=1,\dots,N$ and each $n\ge 0$, we have
\[
\delta_{p_j}\bigl((uC_f)^{n+r}x\bigr)
=
u_r(p_j)\,\delta_{p_j}\bigl((uC_f)^n x\bigr)
=
\lambda\,\delta_{p_j}\bigl((uC_f)^n x\bigr).
\]
Therefore, for any $n \ge 0$, we have
\[
R((uC_f)^{n+r}x)=\lambda\,R((uC_f)^n x).
\]
Thus, every vector $R((uC_f)^n x)$ is a linear combination of $R(x),\,R(uC_fx),\,\dots,\,R((uC_f)^{r-1}x)$.
Hence
\[
\mathbb{C}^N
=
\spanC\{R(x),R(uC_fx),\dots,R((uC_f)^{r-1}x)\},
\]
so $N\le r$.
Therefore, we conclude that $\#E_{r,\lambda}\le r$.
\end{proof}

\section{\texorpdfstring{Entire functions on $\mathbb{C}$}{Entire functions on C}}\label{sec:C}

The one-dimensional case is especially simple since Assumption \ref{ass:image-condition-strong} is automatic as follows:
\begin{lemma}\label{lem:one-dim-graded}
Assume that $\dim_{\mathbb C}X=1$ and that $V$ is infinite-dimensional.
Then, Assumption \ref{ass:image-condition-strong} for $(V,p)$ holds for every $p\in X$.
\end{lemma}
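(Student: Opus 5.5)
In one variable $A_{p,n}=\mathfrak m_p^n/\mathfrak m_p^{n+1}$ is one-dimensional, spanned by the class of $z^n$ in a local coordinate $z$ centered at $p$. Since $B_{p,n}\subset A_{p,n}$, the equality $B_{p,n}=A_{p,n}$ holds exactly when $B_{p,n}\neq 0$. This in turn holds exactly when $V_{p,n}\supsetneq V_{p,n+1}$, i.e.\ when some $h\in V$ vanishes at $p$ to order exactly $n$. The plan is therefore to show that, for each fixed $p\in X$, the set
\[
N_p:=\{n\ge 0 : V_{p,n}\neq V_{p,n+1}\}
\]
is infinite.

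The key step is a dimension count. Suppose, for contradiction, that $N_p$ is finite, say contained in $\{0,1,\dots,M-1\}$. Then $V_{p,n}=V_{p,n+1}$ for all $n\ge M$, so $V_{p,M}=V_{p,n}$ for every $n\ge M$. Hence
\[
V_{p,M}=\bigcap_{n\ge 0}V_{p,n}=\{0\}
\]
by \eqref{eq:local-intersection-zero}; concretely, a holomorphic function vanishing to infinite order at $p$ on the connected manifold $X$ is identically zero. We also have $\dim V/V_{p,M}\le \dim \mathcal{O}_{X,p}/\mathfrak m_p^M=M<\infty$, since the map $V\to \mathcal{O}_{X,p}/\mathfrak m_p^M$ has kernel $V_{p,M}$. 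Combining the two facts gives $\dim V\le M$, which contradicts the infinite-dimensionality of $V$.

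Thus $N_p$ is infinite, and for each $n\in N_p$ the nonzero subspace $B_{p,n}$ of the one-dimensional space $A_{p,n}$ equals $A_{p,n}$. This is Assumption~\ref{ass:image-condition-strong} for $(V,p)$, and since $p$ was arbitrary the statement follows.

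There is no real obstacle here. The only points that need care are the identification $\dim A_{p,n}=1$, which requires $\dim_{\mathbb C}X=1$, and the use of connectedness of $X$ through \eqref{eq:local-intersection-zero}. Continuity of the inclusion is not needed for this lemma.
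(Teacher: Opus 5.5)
Your proof is correct and is essentially the paper's own argument. Since $A_{p,n}$ is one-dimensional, the claim reduces to showing that $V_{p,n}\neq V_{p,n+1}$ for infinitely many $n$. If this failed, the filtration would stabilize, and \eqref{eq:local-intersection-zero} would force $V_{p,M}=\{0\}$. The finite-rank jet map $V\to\mathcal{O}_{X,p}/\mathfrak m_p^M$ would then give $\dim V\le M$, contradicting infinite-dimensionality.
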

\begin{proof}
Fix $p\in X$.
Since $\dim_{\mathbb C}X=1$, the space $A_{p,n}$ is one-dimensional for every
$n\ge 0$.
Thus, $B_{p,n}$ is either $\{0\}$ or $A_{p,n}$.
Therefore, we have $B_{p,n}=\{0\}$ if and only if $V_{p,n}=V_{p,n+1}$.

Assume that $V_{p,n}=V_{p,n+1}$ for all $n\ge N+1$ for some $N \ge 1$.
By \eqref{eq:local-intersection-zero}, we have $V_{p,N+1}=\{0\}$.
Since $V_{p,N+1}$ is the kernel of the map 
\[
V\longrightarrow \mathbb{C}^{N+1},
\qquad
h\longmapsto \bigl(h(p),h'(p),\dots,h^{(N)}(p)\bigr)
\]
, the dimension of $V$ must be at most $N+1$, which is impossible since $V$ is infinite-dimensional.
Hence, $V_{p,n}\neq V_{p,n+1}$ for infinitely many $n$.
\end{proof}

Now, we prove Theorem~\ref{thm:repelling-fixed-point-strong}.
\begin{proof}[Proof of Theorem~\ref{thm:repelling-fixed-point-strong}]
Put $T:=uC_f$ and fix a point $p\in X$ with $f(p)=p$.
Choose a local coordinate $z$ near $p$ such that $z(p)=0$.
If $u(p)\neq 0$, then the conclusion follows from Theorem~\ref{thm:bounded-local}.
We may assume that $u(p)=0$.
Let $m:=\ord_p(u)\ge 1$, and write
\[
u(z)=u_m z^m+O(z^{m+1}),
\qquad u_m\neq 0.
\]
Suppose that $|f'(p)|>1$.
We denote $B_{p,n} := V_{p,n}/V_{p,n+1}$.
Define
\[
\phi_n:B_{p,n}\to\mathbb{C};~
\phi_n(h + V_{p, n+1}) \mapsto \frac{h^{(n)}(p)}{n!}.
\]
Since $V$ is infinite-dimensional, there exists $n\ge 0$ such that $B_{p,n}\neq 0$.
We note that the map $\phi_n$ is an isomorphism.
Since 
\[
f(z)=f'(p)z+O(z^2).
\]
the operator $T$ induces linear maps $[T]_n:B_{p,n}\to B_{p,n+m}$
satisfying
\begin{align}
  \phi_{n+m}\circ [T]_n
=
f'(p)^n u_m\,\phi_n.  
\label{eq:commutativity}
\end{align}
Iterating the previous identity \eqref{eq:commutativity}, we have
\[
\phi_{n+km}\circ [T]_{n+(k-1)m}\circ \cdots \circ [T]_n
=
u_m^k\,f'(p)^{kn+m k(k-1)/2}\,\phi_n.
\]
Hence
\[
\left\|
\phi_{n+km}\circ [T]_{n+(k-1)m}\circ \cdots \circ [T]_n\circ \phi_n^{-1}
\right\|
=
|u_m|^k\,|f'(p)|^{kn + mk(k-1)/2}.
\]
Fix sufficiently small $R>0$.
Since the inclusion $V\hookrightarrow \mathcal{O}(X)$ is continuous, there exists
$C_R>0$ such that
\[
\sup_{|z|\le R}|h(z)|\le C_R\|h\|_V
\]
for any $h\in V$.
By Cauchy's estimate, we have
\[
\left|\frac{h^{(j)}(p)}{j!}\right|\le C_R R^{-j}\|h\|_V
\]
for any $h\in V,\ j\ge 0$.
Therefore, we obtain
\begin{align}
    \|\phi_j\|\le C_R R^{-j}
\end{align}
for any $j \ge 0$.
By the boundedness of $T$ on $V$, we have 
\begin{align}
    \|[T]_{n+jm}\|\le \|T\|
\end{align}
for arbitrary $j \ge 0$.
Therefore
\[
|u_m|^k\,|f'(p)|^{kn+ mk(k-1)/2}
\le
\|\phi_{n+km}\|\,\|T\|^k\,\|\phi_n^{-1}\|
\le
C_R R^{-(n+km)}\|T\|^k\,\|\phi_n^{-1}\|.
\]
The right-hand side grows at most exponentially in $k$, whereas the left-hand side
grows like
\[
|f'(p)|^{mk^2/2}.
\]
This is impossible since $m \ge 1$ and $|f'(p)|>1$.
Therefore, we have $|f'(p)|\le 1$.
\end{proof}
\begin{remark}
    If we generalize this argument to the higher-dimensional case, it would provide an alternative proof of Theorem~\ref{thm:bounded-local}.
\end{remark}

Now, we prove Corollary~\ref{cor:repelling-fixed-point-strong-entire}.
\begin{proof}[Proof of Corollary~\ref{cor:repelling-fixed-point-strong-entire}]
Suppose that $f$ is non-affine.
By classical one-variable holomorphic dynamics, $f$ has a repelling periodic point
$p$ of some period $r\ge 1$; see, for example, \cite[Theorem~1.20]{Sch10}.
Set
\[
g:=f^r,
\qquad
v:=u_r=\prod_{j=0}^{r-1}u\circ f^j.
\]
Then, $p$ is a fixed point of $g$ and $vC_g=(uC_f)^r$ is bounded on $V$.
Since $p$ is a repelling periodic point of period $r$ for $f$, we have
\[
|g'(p)|=|(f^r)'(p)|>1.
\]
This contradicts Theorem~\ref{thm:repelling-fixed-point-strong} applied to $g$ and $v$.
Therefore, $f$ must be affine.
Write $f(z)=az+b$ for some $a, b \in \mathbb{C}$.
If $|a|>1$, then $f$ has the fixed point $p=\frac{b}{1-a}$ and $|f'(p)|=|a|>1$, which contradicts Theorem~\ref{thm:repelling-fixed-point-strong} again.
Therefore, $|a|\le 1$.
\end{proof}

\medskip
We obtain a necessary condition for (super, hyper)cyclicity for a general topological vector space $V$ of entire functions:
\begin{proposition}\label{prop:hyper-C}
Let $V\subset \mathcal{O}(\mathbb{C})$ be a general topological vector space with continuous inclusion.
Let $f:\mathbb{C}\to\mathbb{C}$ be entire and let $u\in \mathcal{O}(\mathbb{C})$.
Assume one of the following conditions:
\begin{enumerate}[label={\rm (\arabic*)}]
    \item $\dim V \ge 1$ and $uC_f$ is hypercyclic on $V$, \label{eq:u-general-hypercyclic}
    \item $\dim V \ge 2$ and $uC_f$ is supercyclic on $V$. \label{eq:u-general-supercyclic}
\end{enumerate}
Then, there exists $b\in \mathbb{C}\setminus\{0\}$ such that $f(z)=z+b$.
\end{proposition}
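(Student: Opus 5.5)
By Theorem~\ref{thm:hyper-local}, under either hypothesis \ref{eq:u-general-hypercyclic} or \ref{eq:u-general-supercyclic} the entire map $f$ has no periodic points at all. So the proof reduces to a purely dynamical classification: an entire $f\colon\mathbb{C}\to\mathbb{C}$ with no periodic points of any period must be a nontrivial translation $z\mapsto z+b$, $b\neq 0$. I would argue by cases on the type of $f$.

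\emph{Constant $f$.} If $f\equiv c$, then $c$ is a fixed point, which is excluded.

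\emph{Non-affine $f$.} Here I invoke the same classical fact used in the proof of Corollary~\ref{cor:repelling-fixed-point-strong-entire}: a non-affine entire function has (repelling) periodic points, e.g.\ \cite[Theorem~1.20]{Sch10}. This contradicts the absence of periodic points.

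\emph{Affine $f$.} Write $f(z)=az+b$. If $a\neq 1$, then $p=b/(1-a)$ is a fixed point, which is excluded. Hence $a=1$. If moreover $b=0$, then $f=\mathrm{id}$ and every point is fixed, which is also excluded. Therefore $f(z)=z+b$ with $b\neq 0$, as claimed.

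The only substantive input is the existence of periodic points for non-affine entire maps, covering both transcendental and polynomial $f$ of degree at least $2$. I expect this to be the main point to check. I would cite it exactly as in the proof of Corollary~\ref{cor:repelling-fixed-point-strong-entire}. Alternatively, one can use Fatou's theorem that $f\circ f$ has a fixed point unless $f$ is a translation, which gives the conclusion directly.

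It also remains to confirm that Theorem~\ref{thm:hyper-local} applies verbatim. That theorem is stated for a general complex topological vector space with continuous inclusion, under exactly the dimension hypotheses $\dim V\ge 1$ (hypercyclic case) and $\dim V\ge 2$ (supercyclic case). These match the hypotheses of the proposition, so no Banach or quasi-Banach structure is needed.
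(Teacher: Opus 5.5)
Your proposal is correct and follows the paper's own proof essentially verbatim. The argument applies Theorem~\ref{thm:hyper-local} to rule out periodic points, then cites \cite{Sch10} for periodic points of non-affine entire maps, then excludes $a\neq 1$ and $b=0$ by exhibiting fixed points. Your separate constant case is harmless but redundant, since it is the affine case $a=0$.
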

\begin{proof}
By Theorem~\ref{thm:hyper-local}, the map $f$ has no periodic points.
A non-affine entire map has periodic points by \cite{Sch10}.
Thus, $f$ must be affine.
If $f(z)=az+b$ with $a\neq 1$, then $f$ has a fixed point.
So $a=1$.
Finally $b\neq 0$ since the identity map also has fixed points.
\end{proof}

\begin{proposition}\label{prop:cyclicity-unweighted}
Let $V\subset \mathcal{O}(\mathbb{C})$ be a general topological vector space with continuous inclusion.
Let $f:\mathbb{C}\to\mathbb{C}$ be entire.
Assume that $\{\delta_p|_V : p \in \mathbb{C}\}$ is linearly independent and that the unweighted composition operator $C_f$ is cyclic.
Then, there exists $a, b\in \mathbb{C}$ such that $f(z)=az+b$.
\end{proposition}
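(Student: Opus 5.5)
The plan is to combine Theorem~\ref{thm:cyclic-local} in the unweighted case with the classical count of periodic points of non-affine entire maps. Since $u\equiv 1$ and $\{\delta_p|_V:p\in\mathbb{C}\}$ is linearly independent, Theorem~\ref{thm:cyclic-local} gives $\#\mathcal{P}_r(f)\le r$ for every $r\ge 1$. It then suffices to show that a non-affine entire $f$ has more than $r$ points in $\mathcal{P}_r(f)$ for some $r$. Note that $f\equiv$ const is affine, so we may assume $f$ is nonconstant and non-affine.

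We split into two cases. If $f$ is a polynomial of degree $d\ge 2$, then $f^r(z)-z$ has degree $d^r$, so $\mathcal{P}_r(f)$ consists of the roots of this polynomial. The number of distinct roots is at least the number of roots with multiplicity one. A standard fact is that all but finitely many periodic points are repelling, and repelling fixed points of $f^r$ are simple roots, since $(f^r)'(p)\neq 1$. By Fatou's theorem the number of non-repelling cycles is finite, at most $2d-2$. Together these give $\#\mathcal{P}_r(f)\ge d^r - C$ for a constant $C$ independent of $r$. Indeed, each multiple root is a parabolic point of $f^r$, that is, a point in a cycle with multiplier a root of unity, and the total multiplicity contributed by such points is bounded: finitely many cycles, each with multiplicity bounded in terms of $d$. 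For large $r$ we get $d^r-C>r$, a contradiction.

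If $f$ is transcendental entire, I would invoke the classical fact (Baker, or the density of repelling cycles in the Julia set, cf.\ \cite{Sch10}) that $f$ has infinitely many repelling periodic points. More precisely, $f^2$ has infinitely many fixed points, a theorem of Rosenbloom. Then $\mathcal{P}_2(f)$ is infinite, contradicting $\#\mathcal{P}_2(f)\le 2$. Since $\mathcal{P}_r(f)$ is a discrete zero set and the Julia set is perfect and infinite with dense repelling cycles, infinitely many such points exist and certainly more than $r$ for a suitable $r$.

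The main obstacle is the polynomial case: making sure the multiplicity loss is uniformly bounded in $r$. A simpler route avoids this. Density of repelling cycles in the infinite Julia set $J(f)$ gives infinitely many repelling periodic points, hence infinitely many cycles. I would use this cleaner argument: pick any $N$ and choose $N+1$ distinct repelling periodic points, with periods $r_1,\dots,r_{N+1}$. Let $r=\operatorname{lcm}(r_i)$. This gives at least $N+1$ points in $\mathcal{P}_r(f)$, but $r$ may exceed $N+1$, so this alone does not suffice. Hence the degree count above is needed, or equivalently the asymptotic that the number of periodic points of period dividing $r$ grows like $d^r$, respectively without bound faster than linearly in the transcendental case. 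I would cite this growth as classical.
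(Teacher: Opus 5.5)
Your argument is correct, but in the polynomial case it takes a different and heavier route than the paper.

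\textbf{Transcendental case.} This coincides with the paper: Rosenbloom's theorem makes $\mathcal{P}_2(f)$ infinite, contradicting $\#\mathcal{P}_2(f)\le 2$. The surrounding remarks about density of repelling cycles add nothing, and, as you note yourself, they would not suffice on their own.

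\textbf{Polynomial case.} For a polynomial of degree $d\ge 2$ you prove $\#\mathcal{P}_r(f)\ge d^r-C$ with $C$ independent of $r$, relying on Fatou's finiteness of non-repelling cycles. The paper uses only $r=1,2$ and elementary algebra. The bound $\#\mathcal{P}_1(f)\le 1$ forces $f(z)-z=c(z-p)^d$. Then $f^2(z)-z=2c(z-p)^d+O((z-p)^{d+1})$ vanishes to order exactly $d<d^2$ at $p$. Hence there is a point $q\ne p$ of exact period $2$, and $\{p,q,f(q)\}\subset\mathcal{P}_2(f)$ contradicts $\#\mathcal{P}_2(f)\le 2$. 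The paper's route is self-contained and needs no dynamical input. Yours gives the stronger statement that $\#\mathcal{P}_r(f)$ grows exponentially in $r$, so it would contradict any subexponential bound on periodic-point counts, at the cost of citing Fatou.

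\textbf{A wrong justification.} The multiplicity of a parabolic periodic point is not bounded in terms of $d$ alone. For $z^2+c$ with a fixed point of multiplier $e^{2\pi i/q}$, that point is a fixed point of $f^q$ of multiplicity $q+1$, and $q$ is arbitrary. The conclusion survives, because what you actually need is a bound independent of $r$ for the fixed map $f$, and that is easy:
\begin{itemize}
\item Let $p$ have exact period $k$ and multiplier a primitive $q$-th root of unity. Then $p$ is a simple root of $f^r(z)-z$ unless $kq\mid r$.
\item Write $f^{kq}(z)=z+a(z-p)^{m+1}+O((z-p)^{m+2})$ with $a\ne 0$ and $m\ge 1$. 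Induction gives $f^{kqj}(z)=z+ja(z-p)^{m+1}+O((z-p)^{m+2})$.
\item So the multiplicity of $p$ is $m+1$ for every $r$ divisible by $kq$.
\end{itemize}
Summing over the finitely many parabolic cycles yields your constant $C$.
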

\begin{proof}
Suppose that $f$ is non-affine.
First assume that $f$ is a polynomial of degree $d\ge 2$.
By Theorem~\ref{thm:cyclic-local} in the unweighted case, we have $\#\mathcal{P}_1(f)\le 1$.
Thus, $f$ has a unique fixed point $p$.
Since $f(z)-z$ is a polynomial of degree $d$, this implies
\[
f(z)-z=c(z-p)^d
\]
for some $c\neq 0$ and some $p\in\mathbb{C}$.
Since $f^2(z)-z=2c(z-p)^d+O((z-p)^{d+1})$, the point $p$ is a zero of $f^2(z)-z$ of multiplicity exactly $d$.
Since the degree of $f^2-z$ is greater than $d$, there exists a point $q\neq p$ such that $f^2(q)=q$.
Because $p$ is the unique fixed point, $q$ is not fixed; hence $q$ has exact period $2$.
Thus, $\mathcal{P}_2(f)$ contains the three distinct points $p$, $q$, and $f(q)$, which contradicts $\#\mathcal{P}_2(f)\le 2$ of Theorem~\ref{thm:cyclic-local}.
Next assume that $f$ is transcendental entire.
By \cite{Rosenbloom1952} (see also \cite{Bergweiler01081991}), $\mathcal{P}_2(f)$ is infinite, contradicting Theorem~\ref{thm:cyclic-local}.
Therefore, $f$ must be affine.
\end{proof}

\begin{remark}
Proposition~\ref{prop:hyper-C} gives only a necessary condition, not a characterization.
On the space $\mathcal{O}(\mathbb{C})$, Birkhoff's classical theorem shows that the translation operator $C_f$ with $f(z)=z+b$ is hypercyclic if and only if $b\neq 0$; see \cite{Birkhoff1929}.
Thus, Proposition~\ref{prop:hyper-C} is sharp at the level of the symbol for the full space of entire functions.
For concrete Banach or Hilbert spaces of entire functions, however, cyclicity and hypercyclicity depend strongly on the function space and on the weight.
For instance, Carroll and Gilmore proved that weighted composition operators on Fock spaces are never supercyclic, hence in particular never hypercyclic \cite{CARROLL2021125234}.
On the other hand, cyclicity may still occur in specific spaces:
Bayart and Tapia-Garc\'ia obtained a full characterization of cyclic composition operators on the Fock space \cite{BayartTapia2024}, while Hai, Noor, and Severiano proved that on the Paley--Wiener space $B^2_\sigma$, a bounded composition operator $C_\phi$ is cyclic precisely when $\phi(z)=z+b$ with either $b\in\mathbb{C}\setminus\mathbb{R}$ or $b\in\mathbb{R}$ and $0<|b|\le \pi/\sigma$ \cite{HaiNoorSeveriano2025}.
See also \cite{Bes2013,Bes2014} for broader results on the dynamics of composition and weighted composition operators on spaces of holomorphic functions.
\end{remark}

\section{Higher-dimensional affine rigidity}
\label{sec:global-affine}
Here, we provide the proof of Theorem~\ref{thm:global-affine}.
First, we prepare the following lemma, which is a key step in the proof.
We denote the real part (resp. imaginary part) of a complex number $z$ by $\Re z$ (resp. $\Im z$).
\begin{lemma}
Let $d \ge 2$, and let $f \colon \mathbb{C}^d \to \mathbb{C}^d$ be a non-affine entire map (equivalently, $f$ is not affine). Then, there exist a real number $a \in (0,1)$, a matrix $U \in \mathrm{SU}(d)$, and a point $p \in \mathbb{C}^d$ such that
\[
(f \circ (aU))(p)=p,
\]
and the derivative $D(f \circ (aU))(p)$ has an eigenvalue of modulus strictly greater than $1$.
\end{lemma}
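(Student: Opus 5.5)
We must find $a\in(0,1)$, $U\in\mathrm{SU}(d)$ and $p$ with $g:=f\circ(aU)$ satisfying $g(p)=p$ and $\|\,$some eigenvalue of $Dg(p)\,\|>1$. The plan is to rescale so that the fixed-point problem lives in a fixed region, and then exploit the non-affineness of $f$: large second-order terms should make the derivative of $f$ large at points where $f$ nearly maps a suitable sphere or ray back onto its preimage under $aU$.

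\textbf{Reformulation.} Since $g(p)=p$ means $f(q)=a^{-1}U^{-1}q$ with $q:=aUp$, and $Dg(p)=Df(q)\,aU$, the problem becomes: find $q$, $a\in(0,1)$, $U\in\mathrm{SU}(d)$ such that
\[
f(q)=a^{-1}U^{-1}q
\quad\text{and}\quad
aDf(q)U \text{ has an eigenvalue of modulus}>1 .
\]
Because $\mathrm{SU}(d)$ acts transitively on spheres for $d\ge 2$, the first condition only requires $\|f(q)\|>\|q\|$. Then $a:=\|q\|/\|f(q)\|\in(0,1)$, and $U$ is any element of $\mathrm{SU}(d)$ with $U^{-1}q=a\,f(q)$. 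This requires $q\neq 0$; I will handle $f(q)$ and $q$ being nonzero when choosing $q$. The key freedom is that $U$ is determined only up to the stabilizer of $q$ (a copy of $\mathrm{SU}(d-1)$, or a circle-sized freedom when $d=2$), and this residual freedom can be used to tune the spectrum of $aDf(q)U$.

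\textbf{Finding $q$ with large derivative.} Since $f$ is not affine, $Df$ is nonconstant entire. I would therefore look for points $q$ with $\|f(q)\|>\|q\|$ and with $\|Df(q)\|$ much larger than $\|f(q)\|/\|q\|=a^{-1}$. Restricting to a complex line $q=tv$ where $f$ is non-affine, the one-variable map $t\mapsto f(tv)$ has derivative growing faster than $|f(tv)|/|t|$ along suitable sequences $|t|\to\infty$ (a Borel--Carathéodory/Cauchy-type argument, or polynomial degree counting if $f$ is polynomial on that line). So the operator norm of $M:=aDf(q)$ can be made $>1$, even large, while $\|f(q)\|>\|q\|$ holds.

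\textbf{Main obstacle: converting norm into spectrum.} A large operator norm of $MU$ does not give a large eigenvalue, and $U$ is constrained by $U^{-1}q=af(q)$. I would try first to use the residual freedom $U\mapsto UW$ with $Wq'=q'$, where $q'=af(q)$, so that $MUW$ has an eigenvalue of modulus $>1$. Concretely, take the singular direction $x$ with $\|Mx\|=\|M\|$ and choose the rotation so that $Ux$ is roughly aligned with $x$; if $x$ is independent of $q'$, a rotation in the orthogonal complement of $q'$ achieves alignment of the relevant component. A trace or determinant computation on a two-dimensional block should then yield an eigenvalue of modulus at least comparable to $\|M\|$ minus bounded error. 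The degenerate case where $x$ is parallel to $q'$ must be avoided by perturbing $q$, or handled by the radial growth argument, which gives $|\langle M U q'', q''\rangle|>1$ directly. Finally, continuity and openness adjust the choices so that $\det U=1$ exactly; the phase of $\det U$ can be corrected within the stabilizer when $d\ge 2$.
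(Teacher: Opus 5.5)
Your reformulation is correct and matches the paper's setup. With $q=aUp$ you need $\|f(q)\|>\|q\|$, $a=\|q\|/\|f(q)\|$, any $U\in\mathrm{SU}(d)$ with $aUf(q)=q$, and then $Dg(p)=aDf(q)U$.

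\textbf{The gap.} The gap is the step you flag as the main obstacle: nothing in the proposal actually produces an eigenvalue of modulus $>1$.

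Choosing $q$ only so that $\|f(q)\|>\|q\|$ and $\|aDf(q)\|$ is large gives no spectral information. The residual freedom in $U$ you plan to exploit does not exist when $d=2$. The stabilizer of a nonzero vector in $\mathrm{SU}(2)$ is trivial: the circle in the $\mathrm{U}(2)$-stabilizer is exactly what the condition $\det U=1$ uses up, so $U$ is uniquely determined by $q$.

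Here is a concrete illustration. Take $f(z_1,z_2)=(2+(z_1-1)^2,\;100(z_1-1))$ and $q=e_1$. Then $f(q)=2e_1$, $a=1/2$, and $U=I$ is forced. Since $Df(e_1)$ has the single nonzero entry $100$ in position $(2,1)$, the matrix $Dg(p)=\tfrac12 Df(e_1)$ is nilpotent with norm $50$. Both of your selection criteria hold at $q$, yet there is no eigenvalue off $0$ and nothing to tune.

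For $d\ge 3$, the claimed eigenvalue ``comparable to $\|M\|$ minus bounded error'' is unsupported. The coupling between the fixed direction $p$ and $p^\perp$ is not bounded by anything you control. Your fallback $|\langle MUq'',q''\rangle|>1$ is also insufficient: a large numerical-range value gives no lower bound on the spectral radius, as nilpotent matrices show.

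\textbf{The missing idea.} You need to choose $q$ so that it comes with an exact eigenvector. The paper does this as follows.

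Set $M(r)=\max_{\|z\|=r}\|f(z)\|$. For non-affine $f$, the function $s\mapsto \log(M(e^s)/e^s)$ is convex and unbounded above. Hence one can pick $r$ with $M(r)>r$ and $\eta:=rM'(r)/M(r)>1$. Then take $q$ to be a maximizer of $\|f\|$ over the \emph{whole} sphere $\|z\|=r$, not merely along a ray or a complex line.

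The Lagrange condition at this maximizer gives $Df(q)^*f(q)=\lambda q$ with $\lambda=M(r)M'(r)/r$ real. Put $p=f(q)$. Then for \emph{every} $U$ with $aUp=q$ one has $U^*q=ap$, and hence $(aDf(q)U)^*p=a^2\lambda p=\eta p$. So the real number $\eta>1$ is an eigenvalue of $Dg(p)$.

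No tuning of $U$ is needed, which is exactly why the rigidity of $U$ in $\mathrm{SU}(2)$ causes no trouble. Your radial-growth intuition is the right ingredient, but it only becomes spectral information once combined with this sphere-maximum Lagrange identity.
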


\begin{proof}
We denote the inner product of $\mathbb{C}^d$ by $\langle \cdot,\cdot\rangle$, which is linear in the first variable and with the associated Euclidean norm $\|\cdot\|$.

For unit vectors $v$ and $w$, let $\varphi_{v,w}(\zeta) := \langle f(\zeta v), w \rangle$ and define 
\[M_{v,w}(r):=\max_{|\zeta|=r} |\varphi_{v,w}(\zeta)|.\]
By Hadamard's three-circles theorem \cite[Theorem~3.13]{Conway1978}, $\log M_{v,w}(e^s)$ is convex as a function of $s$. 
For $r>0$, define
\[
M(r):=\sup_{\|v\|=1,\ \|w\|=1} M_{v,w}(r) = \max_{\|z\|=r}\|f(z)\|.
\]
Since $f$ is not affine, there exist unit vectors $v, w\in \mathbb{C}^d$ such that $\langle f(\zeta v), w \rangle$ is a non-affine entire function of the variable $\zeta$.
Thus, $M(r)/r$ is unbounded as $r\to\infty$.
In fact, if $M(r)=O(r)$ for large $r$, then $|\varphi_{v,w}(\zeta)|=O(|\zeta|)$ as $|\zeta|\to\infty$.
By the Cauchy integral formula, $\varphi_{v,w}^{(n)}(0)=0$ for all $n\ge 2$, namely, $\varphi_{v,w}$ must be affine, which is a contradiction.

Let $H(s):=\log (M(e^s)/e^s)$.
Since $H$ is convex and unbounded above, we see that $H$ is differentiable almost everywhere and there exists $s$ such that $H(s)>0$ and $H'(s)>0$.
Set 
\[
r:=e^s\quad\text{and}\quad \eta:=1 + H'(s)=\frac{rM'(r)}{M(r)}.
\]
Choose $q\in \mathbb{C}^d$ with $\|q\|=r$ and $\|f(q)\|=M(r)$, and set
\[
p:=f(q),\qquad B:=df_q.
\]
We note that $p \neq 0$ since $M(r)>r>0$.
Let 
\[
\Phi(z):=\|f(z)\|^2.
\]
Since the point $q$ is a maximum of $\Phi$ on the sphere $S_r:=\{z\in\mathbb{C}^d:\|z\|=r\}$, the derivative $d\Phi_q$ vanishes on the tangent space $T_qS_r$ of $S_r$ at $q$.
In other words, for any $\xi \in \mathbb{C}^d$ with $\Re\langle \xi,q\rangle=0$, we have
\[
0=d\Phi_q(\xi)=2\Re\langle B\xi,p\rangle,
\]
Thus, there exists $\lambda\in\mathbb{R}$ such that
\begin{align}
B^*p=\lambda q.
\end{align}
Let $u:=\frac{q}{r}$ and $m(t):=\|f(tu)\|$ for $t\ge 0$.
Then, we see that both $m(r)=M(r)$ and $m'(r)=M'(r)$ hold since $m(t)\le M(t)$ for all $t>0$ and both $M$ and $m$ are differentiable at $r$.
Therefore, we have
\[
2M(r)M'(r)
=(m^2)'(r)
=2\Re\langle Bu,p\rangle
=\frac{2}{r}\Re\langle Bq,p\rangle
=2\lambda r,
\]
namely
\[
\lambda=\frac{M(r)M'(r)}{r}.
\]
Let
\[
a:=\frac{r}{M(r)}\in(0,1).
\]
Since $\|ap\|=\|q\|=r$, there exists $U\in \mathrm{SU}(d)$ such that
\[
aUp=q.
\]
Define
\[
g:=f\circ(aU).
\]
We claim that $g$ has a fixed point at $p$ and that $Dg(p)$ has an eigenvalue of absolute value greater than $1$.
By direct calculation, we have $g(p)=f(aUp)=f(q)=p$, thus $p$ is a fixed point of $g$.
Let $A:=Dg(p)=aBU$.
Since $a$ is real and $aUp=q$, we have $U^*q=ap$. 
Therefore,
\[
A^*p
=aU^*B^*p
=a\lambda U^*q
=\lambda a^2 p
=\frac{M(r)M'(r)}{r}\cdot \frac{r^2}{M(r)^2}\,p
=\frac{rM'(r)}{M(r)}\,p
=\eta p.
\]
Thus, $\eta>1$ is an eigenvalue of $A^*$. Since the spectrum of $A^*$ is the complex conjugate of the spectrum of $A$, and $\eta$ is real, $\eta$ is also an eigenvalue of $A$.
Therefore, $Dg(p)$ has an eigenvalue of absolute value greater than $1$.
\end{proof}

We provide the proof of Theorem~\ref{thm:global-affine}.
\begin{proof}[Proof of Theorem~\ref{thm:global-affine}]
In the one-dimensional case, the assertion follows from Corollary~\ref{cor:repelling-fixed-point-strong-entire}.
We consider the case of $d \ge 2$.
Suppose that $f$ is not affine.
By the above lemma, there exist $a\in (0,1)$, $U\in \mathrm{SU}(d)$, and $p\in \mathbb{C}^d$ such that $g:=f\circ(aU)$ has a fixed point at $p$ and the derivative $Dg(p)$ has an eigenvalue of absolute value greater than $1$.
By the assumption that there exists a nonvanishing entire function $v$ such that $vC_{aU}$ is bounded on $V$, we have $(v \cdot (u\circ aU))C_g=(vC_{aU})(uC_f)$ is also bounded on $V$.
By Theorem~\ref{thm:bounded-local}, the eigenvalues of $Dg(p)$ have absolute value at most $1$, which is a contradiction.
Therefore, $f$ must be affine.
\end{proof}

\section{\texorpdfstring{Polynomial automorphisms of $\mathbb{C}^2$}{Polynomial automorphisms of C2}}\label{sec:C2}
Here, we provide the proof of Theorem~\ref{thm:C2}.
\begin{proof}[Proof of Theorem~\ref{thm:C2}]
Assume that $f$ is not affine.
By the same argument as in the unweighted proof in \cite[Theorem~1.3]{ISHIKAWA2023109048}, we see that there exists a finite composition $h$ of generalized H\'enon maps and a nonvanishing entire function $v$ such that $vC_h$ is bounded on $V$.
Thus, using \cite[Theorem~3.4]{BS92} as in the proof of \cite[Theorem~1.3]{ISHIKAWA2023109048}, we see that $h$ has a saddle periodic point $p$, which contradicts Theorem~\ref{thm:bounded-local}.
Therefore, $f$ must be affine.
\end{proof}

\begin{remark}
It is natural to expect a higher-dimensional analogue of Theorem~\ref{thm:C2}.
For $d\ge 2$, define
\[
\mathcal{G}_d(V):=
\Big\{A\in {\rm GL}_d(\mathbb{C}) : \exists b\in \mathbb{C}^d,\ \exists w\in \mathcal{O}(\mathbb{C}^d),\
e^wC_{A(\cdot)+b}\text{ bounded on }V\Big\}.
\]
One may conjecture that if Assumption~\ref{ass:image-condition-strong} for $(V,p)$
holds for every $p\in \mathbb{C}^d$ and
\[
\spanC\bigl(\mathcal{G}_d(V)\bigr)={\rm M}_d(\mathbb{C}),
\]
then every bounded weighted composition operator with nonvanishing weight has affine symbol.
For several concrete spaces of entire functions, higher-dimensional affine-symbol results are already known by direct operator-theoretic methods; see, for example,
\cite{DKL17,SS17,IIS20}.
In the weighted Fock setting, strong rigidity results are also available under
additional assumptions such as invertibility or unitarity
\cite{Zhao2014,Zhao2015}.
A general dynamical proof beyond polynomial automorphisms would likely require
new results on the existence of repelling or saddle periodic points in several
complex variables; compare \cite{10.2307/43736735}.
\end{remark}

\bibliographystyle{plain}
\bibliography{reference}

\begin{thebibliography}{10}

\bibitem{ArroussiTong2019}
Hicham Arroussi and Cezhong Tong.
\newblock Weighted composition operators between large {Fock} spaces in several
  complex variables.
\newblock {\em Journal of Functional Analysis}, 277(10):3436--3466, 2019.

\bibitem{BayartTapia2024}
Fr{\'e}d{\'e}ric Bayart and Sebasti{\'a}n Tapia-Garc{\'i}a.
\newblock Cyclicity of composition operators on the fock space.
\newblock {\em Journal of Operator Theory}, 92(2):549--577, 2024.

\bibitem{BS92}
Eric Bedford and John Smillie.
\newblock Polynomial diffeomorphisms of $\mathbb{C}^2$.
\newblock {\em Mathematische Annalen}, 294(1):395--420, 1992.

\bibitem{BeltranMeneu2020}
Mar{\'i}a~J. Beltr{\'a}n-Meneu.
\newblock Dynamics of weighted composition operators on weighted {Banach}
  spaces of entire functions.
\newblock {\em Journal of Mathematical Analysis and Applications},
  492(1):124422, 2020.

\bibitem{BeltranMeneuJorda2021}
Mar{\'i}a~J. Beltr{\'a}n-Meneu and Enrique Jord{\'a}.
\newblock Dynamics of weighted composition operators on spaces of entire
  functions of exponential and infraexponential type.
\newblock {\em Mediterranean Journal of Mathematics}, 18:212, 2021.

\bibitem{Bergweiler01081991}
Walter Bergweiler.
\newblock Periodic points of entire functions: proof of a conjecture of baker.
\newblock {\em Complex Variables, Theory and Application: An International
  Journal}, 17(1-2):57--72, 1991.

\bibitem{Bes2013}
Juan B{\`e}s.
\newblock Dynamics of composition operators with holomorphic symbol.
\newblock {\em Revista de la Real Academia de Ciencias Exactas, F{\'i}sicas y
  Naturales. Serie A. Matem{\'a}ticas}, 107:437--449, 2013.

\bibitem{Bes2014}
Juan B{\`e}s.
\newblock Dynamics of weighted composition operators.
\newblock {\em Complex Analysis and Operator Theory}, 8(1):159--176, 2014.

\bibitem{Birkhoff1929}
G.~D. Birkhoff.
\newblock D{\'e}monstration d'un th{\'e}or{\`e}me {\'e}l{\'e}mentaire sur les
  fonctions enti{\`e}res.
\newblock {\em Comptes Rendus Hebdomadaires des S{\'e}ances de l'Acad{\'e}mie
  des Sciences}, 189:473--475, 1929.

\bibitem{Bonet2022Survey}
Jos{\'e} Bonet.
\newblock Weighted {Banach} spaces of analytic functions with sup-norms and
  operators between them: a survey.
\newblock {\em Revista de la Real Academia de Ciencias Exactas, F{\'i}sicas y
  Naturales. Serie A. Matem{\'a}ticas}, 116(4):184, 2022.

\bibitem{CARROLL2021125234}
Tom Carroll and Clifford Gilmore.
\newblock Weighted composition operators on fock spaces and their dynamics.
\newblock {\em Journal of Mathematical Analysis and Applications},
  502(1):125234, 2021.

\bibitem{CMS}
Brent~J. Carswell, Barbara~D. MacCluer, and Alex Schuster.
\newblock Composition operators on the {F}ock space.
\newblock {\em Acta Scientiarum Mathematicarum}, 69(3-4):871--887, 2003.

\bibitem{Conway1978}
John~B. Conway.
\newblock {\em Functions of One Complex Variable I}, volume~11 of {\em Graduate
  Texts in Mathematics}.
\newblock Springer, 2 edition, 1978.

\bibitem{10.2307/43736735}
John~Erik Forn^^c3^^a6ss and Nessim Sibony.
\newblock Some open problems in higher dimensional complex analysis and complex
  dynamics.
\newblock {\em Publicacions Matem^^c3^^a0tiques}, 45(2):529--547, 2001.

\bibitem{Galaz-Fontes2013}
F.~Galaz-Fontes.
\newblock Another proof for non-supercyclicity in finite dimensional complex
  banach spaces.
\newblock {\em The American Mathematical Monthly}, 120(5):pp. 466--468, 2013.

\bibitem{HaiKhoi2016}
Pham~Viet Hai and Le~Hai Khoi.
\newblock Boundedness and compactness of weighted composition operators on
  {Fock} spaces $\mathcal{F}^{p}(\mathbb{C})$.
\newblock {\em Acta Mathematica Vietnamica}, 41(3):531--537, 2016.

\bibitem{HaiNoorSeveriano2025}
Pham~Viet Hai, Waleed Noor, and Osmar~Reis Severiano.
\newblock Cyclicity of composition operators on the {Paley--Wiener} spaces.
\newblock {\em Comptes Rendus. Math{\'e}matique}, 363:879--886, 2025.

\bibitem{HaiRosenfeld2021}
Pham~Viet Hai and Joel~A. Rosenfeld.
\newblock Weighted composition operators on the mittag-leffler spaces of entire
  functions.
\newblock {\em Complex Analysis and Operator Theory}, 15:17, 2021.

\bibitem{Ueki2007}
Sei ichiro Ueki.
\newblock Weighted composition operator on the {Fock} space.
\newblock {\em Proceedings of the American Mathematical Society},
  135(5):1405--1410, 2007.

\bibitem{Ueki2010}
Sei ichiro Ueki.
\newblock Weighted composition operators on some function spaces of entire
  functions.
\newblock {\em Bulletin of the Belgian Mathematical Society--Simon Stevin},
  17(2):343--353, 2010.

\bibitem{IIS20}
Masahiro Ikeda, Isao Ishikawa, and Yoshihiro Sawano.
\newblock Boundedness of composition operators on reproducing kernel {H}ilbert
  spaces with analytic positive definite functions.
\newblock {\em Journal of Mathematical Analysis and Applications},
  511(1):126048, 2022.

\bibitem{ISHIKAWA2023109048}
Isao Ishikawa.
\newblock {Bounded composition operators on functional quasi-Banach spaces and
  stability of dynamical systems}.
\newblock {\em Advances in Mathematics}, 424:109048, 2023.

\bibitem{KPS_NatOpMan93}
Ivan Kol\'{a}\v{r}, Peter~W. Michor, and Jan Slov\'{a}k.
\newblock {\em Natural operations in differential geometry}.
\newblock Springer-Verlag, Berlin, 1993.

\bibitem{Le2014}
Trieu Le.
\newblock Normal and isometric weighted composition operators on the fock
  space.
\newblock {\em Bulletin of the London Mathematical Society}, 46:847--856, 07
  2014.

\bibitem{DKL17}
Doan Luan, Le~Hai~Khoi, and Trieu Le.
\newblock Composition operators on {H}ilbert spaces of entire functions of
  several variables.
\newblock {\em Integral Equations and Operator Theory}, 88(3):301--330, 2017.

\bibitem{Rosenbloom1952}
P.~C. Rosenbloom.
\newblock The fix-points of entire functions.
\newblock {\em Meddelanden fr^^c3^^a5n Lunds Universitets Matematiska
  Seminarium}, 1952(Tome Supplementaire):186--192, 1952.

\bibitem{Sch10}
Dierk Schleicher.
\newblock Dynamics of entire functions.
\newblock In G.~Gentili, J.~Guenot J, and G.~Patrizio, editors, {\em
  Holomorphic Dynamical Systems}, volume 1998, pages 295--339. Springer,
  Berlin, Heidelberg, Berlin, Heidelberg, 2010.

\bibitem{SS17}
Jan Stochel and Jerzy~Bartlomiej Stochel.
\newblock Composition operators on {H}ilbert spaces of entire functions with
  analytic symbols.
\newblock {\em Journal of Mathematical Analysis and Applications},
  454(2):1019^^e2^^80^^931066, 2017.

\bibitem{TienKhoi2019}
Pham~Trong Tien and Le~Hai Khoi.
\newblock Weighted composition operators between different {Fock} spaces.
\newblock {\em Potential Analysis}, 50:171--195, 2019.

\bibitem{Zhao2014}
Liankuo Zhao.
\newblock Unitary weighted composition operators on the fock space of
  $\mathbb{C}^n$.
\newblock {\em Complex Analysis and Operator Theory}, 8:581--590, 02 2014.

\bibitem{Zhao2015}
Liankuo Zhao.
\newblock Invertible weighted composition operators on the fock space of
  $\mathbb{C}^n$.
\newblock {\em Journal of Function Spaces}, 2015:250358, Jun 2015.

\end{thebibliography}

\appendix
\section{\texorpdfstring{Comparison with the earlier kernel condition}{Comparison with the earlier kernel condition}}
\label{app:comparison}

We compare Assumption~\ref{ass:image-condition} with the kernel condition from \cite{ISHIKAWA2023109048}.
We recall the notation from \cite{ISHIKAWA2023109048}.
We denote the continuous dual of the topological vector space $W$ by $W'$.
For a continuous linear map $T:W_1 \to W_2$ of topological vector spaces, we denote its dual map by $T':W_2' \to W_1'$.
For $n\ge 0$, let
\[
\mathcal{D}_n^{\mathrm{hol}}(\mathbb{C}^d)
:=
\bigoplus_{|\alpha|\le n}\mathbb{C}\,\partial_z^\alpha,
\qquad
 \mathcal{D}^{\mathrm{hol}}(\mathbb{C}^d):=\bigcup_{n\ge 0} \mathcal{D}_n^{\mathrm{hol}}(\mathbb{C}^d).
\]
Let $p\in X$ and let $\phi$ be a local holomorphic coordinate at $p$.
Define
\[
\delta_{p,\phi}(D)(h):=D(h\circ \phi^{-1})(\phi(p))
\qquad
(D\in  \mathcal{D}^{\mathrm{hol}}(\mathbb{C}^d),\ h\in \mathcal{O}(X)).
\]
We write
\[
 \mathcal{D}_n^{\mathrm{hol}}(X)_p:=\delta_{p,\phi}\bigl( \mathcal{D}_n^{\mathrm{hol}}(\mathbb{C}^d)\bigr),
\qquad
\mathcal{D}_{-1}^{\mathrm{hol}}(X)_p:=\{0\}.
\]
Note that $ \mathcal{D}_n^{\mathrm{hol}}(X)_p$ depends only on $p$, not on the choice of $\phi$.
The dual map of $\iota:V\hookrightarrow \mathcal{O}(X)$ induces
\[
\kappa^{n,\mathrm{hol}}_p:
 \mathcal{D}_n^{\mathrm{hol}}(X)_p/\mathcal{D}_{n-1}^{\mathrm{hol}}(X)_p
\longrightarrow
\iota'\!\bigl( \mathcal{D}_n^{\mathrm{hol}}(X)_p\bigr)\Big/\iota'\!\bigl(\mathcal{D}_{n-1}^{\mathrm{hol}}(X)_p\bigr).
\]
Also, the dual of the map $\gr_p^n(uf^*):A_{f(p),n}\to A_{p,n}$ induces
\[\gr^{n,\mathrm{hol}}_p((uf^*)'):
 \mathcal{D}_n^{\mathrm{hol}}(X)_p/\mathcal{D}_{n-1}^{\mathrm{hol}}(X)_p 
\longrightarrow 
 \mathcal{D}_n^{\mathrm{hol}}(X)_{f(p)}/\mathcal{D}_{n-1}^{\mathrm{hol}}(X)_{f(p)}.\]
Here, $\gr^{n,\mathrm{hol}}_p((uf^*)')$ coincides with $\gr^n_{(f^*)'}$ in \cite{ISHIKAWA2023109048} when $u \equiv 1$.

\begin{proposition}\label{prop:local-assumption-kappa}
Let $f:X\to X$ be holomorphic and let
$u\in \mathcal{O}(X)$.
Then, the following are equivalent:
\begin{enumerate}[label={\rm (\arabic*)}]
\item $(V,f,u,p)$ satisfies Assumption~\ref{ass:image-condition}.
\item $\Ker \kappa^{n,\mathrm{hol}}_p\subset
\Ker \gr^{n,\mathrm{hol}}_p((uf^*)')$
holds for infinitely many $n\ge 1$.
\end{enumerate}
\end{proposition}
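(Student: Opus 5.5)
The plan is to prove, for each fixed $n\ge 1$, that
\[
{\rm Im}\,\gr_p^n(uf^*)\subset B_{p,n}
\iff
\Ker \kappa^{n,\mathrm{hol}}_p\subset \Ker \gr^{n,\mathrm{hol}}_p((uf^*)').
\]
The proposition then follows, since both conditions are asserted for infinitely many $n$. Everything reduces to finite-dimensional linear algebra and an identification of annihilators.

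\emph{Step 1: identify the dual pairing.} The pairing $(D,h)\mapsto D(h)$ descends to a nondegenerate pairing between $\mathcal{D}_n^{\mathrm{hol}}(X)_p/\mathcal{D}_{n-1}^{\mathrm{hol}}(X)_p$ and $A_{p,n}=\mathfrak m_p^n/\mathfrak m_p^{n+1}$. Here one evaluates a representative operator on a representative germ in $\mathfrak m_p^n$: operators of order $\le n-1$ kill $\mathfrak m_p^n$, and order-$\le n$ operators kill $\mathfrak m_p^{n+1}$. In local coordinates this is just the standard pairing between $\partial^\alpha$ with $|\alpha|=n$ and the monomials $z^\beta$ with $|\beta|=n$, so it is perfect. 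Under this identification, $\gr^{n,\mathrm{hol}}_p((uf^*)')$ is by definition the transpose of $\gr_p^n(uf^*):A_{f(p),n}\to A_{p,n}$.

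\emph{Step 2: show $\Ker\kappa_p^{n,\mathrm{hol}}=B_{p,n}^{\perp}$.} A class $[D]$ lies in $\Ker\kappa^{n,\mathrm{hol}}_p$ iff $D|_V=E|_V$ for some $E\in\mathcal{D}_{n-1}^{\mathrm{hol}}(X)_p$.

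First the forward inclusion. If $D|_V=E|_V$ and $h\in V_{p,n}$, then $D(h)=E(h)=0$. So $[D]$ annihilates $B_{p,n}$.

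Conversely, suppose $D$ vanishes on $V_{p,n}$, the kernel of the finite-rank jet map $J:V\to\mathcal{O}_{X,p}/\mathfrak m_p^{n}$. Then $D|_V$ factors as $\ell\circ J$ for some linear functional $\ell$ on $J(V)$. Extend $\ell$ to all of $\mathcal{O}_{X,p}/\mathfrak m_p^n$; the extension is then given by some $E\in\mathcal{D}_{n-1}^{\mathrm{hol}}(X)_p$, and $E|_V=D|_V$. This means $\iota'(D)\in\iota'(\mathcal{D}_{n-1}^{\mathrm{hol}}(X)_p)$.

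I expect this step to be the main, though modest, obstacle. Care is needed because $\mathcal{D}_{n-1}$ consists of functionals on the jets of order $<n$, and $V$ maps onto only a subspace of them. Finite dimensionality makes the extension argument work.

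\emph{Step 3: conclude by duality.} For a linear map $L:A\to A'$ between finite-dimensional spaces and a subspace $B\subset A'$, we have $L(A)\subset B$ iff $B^\perp\subset\Ker L^{t}$. Indeed, $\phi\in B^\perp$ kills $L(A)$ iff $L^t\phi=0$, and $L(A)\subset B$ iff $B^\perp\subset L(A)^\perp$.

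Apply this with $L=\gr_p^n(uf^*)$ and $B=B_{p,n}$. By Steps 1 and 2 the condition becomes exactly $\Ker\kappa^{n,\mathrm{hol}}_p\subset\Ker\gr^{n,\mathrm{hol}}_p((uf^*)')$. This holds for each $n$, which gives the equivalence of (1) and (2).
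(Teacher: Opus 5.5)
Your proposal is correct and follows the paper's proof essentially step for step. Both arguments use the perfect pairing between $\mathcal{D}_n^{\mathrm{hol}}(X)_p/\mathcal{D}_{n-1}^{\mathrm{hol}}(X)_p$ and $A_{p,n}$, the identification $\Ker\kappa^{n,\mathrm{hol}}_p=B_{p,n}^{\perp}$, and the finite-dimensional duality $L(A)\subset B\iff B^{\perp}\subset\Ker L^{t}$ applied for each $n$. Your Step~2 extension argument makes explicit the injectivity that the paper only asserts in the identification $\iota'(\mathcal{D}_n^{\mathrm{hol}}(X)_p)/\iota'(\mathcal{D}_{n-1}^{\mathrm{hol}}(X)_p)\cong B_{p,n}'$, and you correctly dispense with the paper's separate treatment of the case $u(p)=0$.
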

\begin{proof}
If $u(p)=0$, then $\gr_p^n(uf^*)=0$ for every $n\ge 1$, thus the equivalence is obvious.
We assume that $u(p)\neq 0$.
Consider the well-defined perfect pairing
\[
\langle\cdot,\cdot\rangle_n:
\Bigl( \mathcal{D}_n^{\mathrm{hol}}(X)_p/\mathcal{D}_{n-1}^{\mathrm{hol}}(X)_p\Bigr)\times A_{p,n}\to\mathbb C
\]
defined by
\[
\langle [D],[h]\rangle_n:=D(h).
\]
Then, we have an identification
\[
 \mathcal{D}_n^{\mathrm{hol}}(X)_p/\mathcal{D}_{n-1}^{\mathrm{hol}}(X)_p \cong A_{p,n}'.
\]
The pairing restricted to $V_{p,n}\subset \mathfrak m_p^n$ induces another identification
\[
\iota'\!\bigl( \mathcal{D}_n^{\mathrm{hol}}(X)_p\bigr)\Big/\iota'\!\bigl(\mathcal{D}_{n-1}^{\mathrm{hol}}(X)_p\bigr)
\cong B_{p,n}'.
\]
Under these identifications
\[
\kappa^{n,\mathrm{hol}}_p:
 \mathcal{D}_n^{\mathrm{hol}}(X)_p/\mathcal{D}_{n-1}^{\mathrm{hol}}(X)_p
\longrightarrow
\iota'\!\bigl( \mathcal{D}_n^{\mathrm{hol}}(X)_p\bigr)\Big/\iota'\!\bigl(\mathcal{D}_{n-1}^{\mathrm{hol}}(X)_p\bigr)
\]
coincides with the restriction map
\[
A_{p,n}'\longrightarrow B_{p,n}'.
\]
Therefore, we have
\[
\Ker \kappa^{n,\mathrm{hol}}_p \cong B_{p,n}^{\perp}
:=\{\ell \in A_{p,n}':\ell|_{B_{p,n}}=0\}.
\]
Put
\begin{align*}
L_n &:=\gr_p^n(uf^*):A_{f(p),n}\to A_{p,n},\\
G_n &:=\gr^{n,\mathrm{hol}}_p((uf^*)'):  \mathcal{D}_n^{\mathrm{hol}}(X)_p/\mathcal{D}_{n-1}^{\mathrm{hol}}(X)_p\to  \mathcal{D}_n^{\mathrm{hol}}(X)_{f(p)}/\mathcal{D}_{n-1}^{\mathrm{hol}}(X)_{f(p)}.
\end{align*}
We note that these two maps are dual to each other with respect to $\langle\cdot,\cdot\rangle_n$, namely, $\langle G_n\ell,a\rangle_n=\langle \ell,L_n a\rangle_n$  holds for $\ell\in A_{p,n}'$, $a\in A_{f(p),n}$.

First, assume that
\[
\Ker \kappa^{n,\mathrm{hol}}_p\subset \Ker G_n.
\]
Let $a\in A_{f(p),n}$ and let $\ell\in B_{p,n}^{\perp}=\Ker \kappa^{n,\mathrm{hol}}_p$.
Then, $G_n\ell=0$, so
\[
\ell(L_n a)=\langle \ell,L_n a\rangle_n
=\langle G_n\ell,a\rangle_n=0.
\]
Thus, $L_n a\in (B_{p,n}^{\perp})^{\perp} = B_{p,n}$.
Since $a$ is arbitrary, we have $L_n(A_{f(p),n})\subset B_{p,n}$, namely, Assumption~\ref{ass:image-condition} for $(V,f,u,p)$.

Conversely, assume that
\[
L_n(A_{f(p),n})\subset B_{p,n}.
\]
Let $\ell\in \Ker \kappa^{n,\mathrm{hol}}_p=B_{p,n}^{\perp}$ and let $a\in A_{f(p),n}$.
Then, $L_n a\in B_{p,n}$, hence
\[
\langle G_n\ell,a\rangle_n
=\langle \ell,L_n a\rangle_n=0.
\]
Since $a\in A_{f(p),n}$ is arbitrary, we obtain $G_n\ell=0$.
Therefore
\[
\Ker \kappa^{n,\mathrm{hol}}_p\subset \Ker G_n.
\]
\end{proof}

\end{document}